\numberwithin{equation}{section}
\renewcommand\appendix{\par
\setcounter{section}{0}%
\setcounter{subsection}{0}%
\setcounter{table}{0}
\setcounter{figure}{0}
\setcounter{equation}{0}
\gdef\thetable{\Alph{table}}
\gdef\thefigure{\Alph{figure}}
\gdef\theequation{\Alph{section}-\arabic{equation}}
\section*{Appendix}
\gdef\thesection{\Alph{section}}
\setcounter{section}{0}} 
\renewcommand\a{\alpha}
\def\eps{\varepsilon }
\renewcommand\a{\alpha}
\def\eps{\varepsilon}
\newcommand\br{\begin{remark}}
\newcommand\er{\end{remark}}
\newcommand\bp{\begin{pmatrix}}
\newcommand\ep{\end{pmatrix}}
\newcommand{\be}{\begin{equation}}
\newcommand{\ee}{\end{equation}}
\newcommand\ba{\begin{equation}\begin{aligned}}
\newcommand\ea{\end{aligned}\end{equation}}
\newcommand{\bap}{\begin{app}}
\newcommand{\eap}{\end{app}}
\newcommand{\begs}{\begin{exams}}
\newcommand{\eegs}{\end{exams}}
\newcommand{\beg}{\begin{example}}
\newcommand{\eeg}{\end{exaplem}}
\newcommand{\bpr}{\begin{proposition}}
\newcommand{\epr}{\end{proposition}}
\newcommand{\bt}{\begin{theorem}}
\newcommand{\et}{\end{theorem}}
\newcommand{\bc}{\begin{corollary}}
\newcommand{\ec}{\end{corollary}}
\newcommand{\bl}{\begin{lemma}}
\newcommand{\bd}{\begin{definition}}
\newcommand{\ed}{\end{definition}}
\newcommand{\brs}{\begin{remarks}}
\newcommand{\ers}{\end{remarks}}
\newcommand{\sgn}{\text{\rm sgn}}
\newtheorem{theorem}{Theorem}[section]
\newtheorem{proposition}[theorem]{Proposition}
\newtheorem{corollary}[theorem]{Corollary}
\newtheorem{lemma}[theorem]{Lemma}
\theoremstyle{remark}
\newtheorem{remark}[theorem]{Remark}
\theoremstyle{definition}
\newtheorem{definition}[theorem]{Definition}
\newtheorem{example}[theorem]{Example}
\newcommand{\RM}{\mathbb{R}}
\newcommand{\ZM}{\mathbb{Z}}
\newcommand{\CM}{\mathbb{C}}
\newcommand{\beq}{\begin{equation}}
\newcommand{\eeq}{\end{equation}}
\title{
Stability of Small Periodic Waves in Fractional KdV Type Equations 
}
\author{\sc
Mathew A. Johnson\thanks{Department of Mathematics, University of Kansas, 1460 Jayhawk Boulevard, 
Lawrence, KS 66045; matjohn@math.ku.edu.
}
}
\begin{document}
\maketitle


\begin{center}
{\bf Keywords}: KdV-type equations, fractional dispersion, periodic traveling waves, spectral stability.
\end{center}

\begin{center}
{\bf 2010 MR Subject Classification}: 35Q53, 35B35, 35B10, 35P99.
\end{center}


\begin{abstract}
We consider the effects of varying dispersion and nonlinearity on the stability of periodic traveling
wave solutions of nonlinear PDE of KdV-type, including generalized KdV and Benjamin-Ono equations.  
In this investigation, we consider the spectral
stability of such solutions that arise as small perturbations
of an equilibrium state.  A key feature of our analysis is the development of a nonlocal Floquet-like theory
that is suitable to analyze the $L^2(\RM)$ spectrum of the associated linearized operators.  Using spectral perturbation theory then,
we derive a relationship between the power of the nonlinearity and the symbol of the 
fractional dispersive operator that determines the spectral stability and instability to arbitrary small localized perturbations.
\end{abstract}



\pagestyle{myheadings}
\thispagestyle{plain}
\markboth{Mathew A. Johnson}{Stability of Small Periodic Waves of KdV Type}

\section{Introduction}
In this paper, we are concerned with the spectral stability and instability of periodic traveling wave solutions 
$u(x,t)=u(x-ct)$ of a class of scalar evolution equations of the form
\begin{equation}\label{e:kdv}
u_{t}+\left(-\Lambda^\alpha u+u^{p+1}\right)_{x}=0,\quad x,t\in\RM,
\end{equation}
where subscripts denote partial differentiation, $u=u(x,t)$ is a real-valued function,
and where the pseudodifferential operator $\Lambda=\sqrt{-\partial_x^2}$, referred to as Calderon's operator,
is of order one and is defined by its Fourier multiplier as $\widehat{\Lambda u}(k)=|k|\hat{u}(k)$;
throughout our analysis, the circumflex denotes the Fourier transform taken either on $\RM$
or an appropriate one-dimensional torus, depending on the context.  By inspection of the 
Fourier symbol, we see that Calderon's operator can alternatively be defined
as $\Lambda=\mathcal{H}\partial_x$ where $\mathcal{H}$ denotes the Hilbert transform being
applied either on the line or the torus, depending on the context, and in the $x$ variable.
Here, we consider $\alpha>\frac{1}{2}$ and either $p\in\mathbb{N}$ or $p=\frac{m}{n}$ with
$m$ and $n$ being even and odd natural numbers, respectively.

Equations of the form \eqref{e:kdv} arise naturally in the modeling
of unidirectional propagation of weakly nonlinear dispersive waves of long-wavelength, in which
case $u$ represents the wave profile or its velocity and the variables $x$ and $t$ are proportional
to the distance in the direction of propagation and the elapsed time, respectively.
In this context, the parameter $\alpha>0$ characterizes
the linear dispersion about the zero state; in particular, letting $c(\xi)$ denote the phase velocity
of plane waves with frequency $\xi$, we find the dispersion relation $c(\xi)=|\xi|^\alpha$
for equations of the form \eqref{e:kdv}.  It is the goal
of this paper to derive conditions on the dispersion parameter $\alpha$ and the power $p$ of the nonlinearity
for which the small amplitude periodic traveling wave solutions of \eqref{e:kdv} are stable.

Arguably the most common and well studied example of an equation of form \eqref{e:kdv} is the generalized KdV equation
\begin{equation}\label{e:gkdv}
u_t+u_{xxx}+\left(u^{p+1}\right)_x=0
\end{equation}
corresponding to $\alpha=2$.  When $p=1$, \eqref{e:gkdv} was proposed by Korteweg and de Vries \cite{KdV}
in 1895 to model the unidirectional propagation of surface water waves of small amplitude and long wavelengths in a channel.
When $p=2$, \eqref{e:gkdv} corresponds to the modified KdV equation, which arises as a model for large amplitude
internal waves in a density stratified medium, as well as for Fermi-Pasta-Ulam lattices with bistable
nonlinearity.  In both of these cases, the PDE is completely integrable and hence the Cauchy problem can, in principle,
be completely solved via the inverse scattering transform.

Another important class of equations of form \eqref{e:kdv} arises when $\alpha=1$, in which
case we recover the generalized Benjamin-Ono equation
\begin{equation}\label{e:gbo}
u_t+\left(-\mathcal{H}u_x+u^{p+1}\right)_x=0.
\end{equation}
When $p=1$, Benjamin \cite{Ben} and Ono \cite{Ono} independently derived \eqref{e:gbo}
as a model for the unidirectional propagation of internal waves in deep water.
In this case, \eqref{e:gbo} is also completely integrable.  
Further examples with varying values of $\alpha$ can be derived in the context of shallow water theory
by assuming different order relationships between the small quantities $\gamma:=\frac{a}{h}$ and $\beta:=\frac{h^2}{\lambda^2}$, where
$h$ denotes the depth of the water at rest, and $a$ and $\lambda$ denote characteristic amplitudes and wavelengths
of the waves searched for, respectively, corresponding to modeling in the small amplitude and small wavelength
regime.  
Of particular interest, although outside the scope of our analysis, we point out that in the case\footnote{Notice
that the operator $\partial_x\Lambda^\alpha$ is non-singular for all $\alpha\geq-1$.} $\alpha=-\frac{1}{2}$, 
\eqref{e:kdv} was recently shown by Hur \cite{H} to approximate up to quadratic order the surface water wave problem in two
spatial dimensions in the infinite depth case, hence generalizing Whitham's equation in \cite{W}.

The stability of the solitary wave solutions of equations of the form \eqref{e:kdv} have a long and rich history,
dating back to the novel work of Benjamin in \cite{Ben2} in which the stability of such waves
in the case $\alpha=2$ was established for $p<4$.  The stability analysis for $\alpha\geq 1$ was carried
out by Bona, Souganidis, and Strauss in \cite{BSS} where the authors extended the seminal work of 
Grillakis, Shatah, and Strauss \cite{GSS} to equations of form \eqref{e:kdv} where, most notably, the symplectic
form in the Hamiltonian structure fails to be invertible.  In \cite{BSS}, the authors establish the nonlinear orbital stability
of the solitary wave solutions of \eqref{e:kdv} when $\alpha\geq 1$ for $p<2\alpha$ and the instability of such solutions
for $p>2\alpha$.

In contrast to its solitary wave counterpart, the stability theory for $T$-periodic traveling 
wave solutions of equations of the form \eqref{e:kdv} has received
considerably less attention, even in the classical case $\alpha=2$, 
and this theory is still far from complete.  Within this context, stability results typically fall
into one of two categories: spectral stability to perturbations in $C_b(\RM)$ or $L^2(\RM)$ (see \cite{BrJ,BrJZ,DB,DN1,HK}), and nonlinear
orbital stability to perturbations in $L^2_{\rm per}([0,nT])$ for some $n\in\mathbb{N}$ 
(see \cite{A,ABS,AN,BrJK,DN1,DN2,DK,J}).  The majority of the nonlinear stability results restrict to the co-periodic case, i.e. perturbations
in $L^2_{\rm per}([0,nT])$ when $n=1$, a clearly very restrictive class of perturbations,
in which case authors are
often
able to use adaptations of the stability theory in \cite{GSS} to establish orbital stability.  
The only examples the author is aware of that establishes orbital stability for $n>1$ 
are due to Deconinck and Nivala \cite{DN1,DN2} where the authors
consider the KdV and mKdV equations, relying heavily in both cases on the complete integrability of the
governing equations. 

Concerning the spectral stability of such solutions, Deconinck and Bottman
\cite{DB}
established the spectral stability in $L^2(\RM)$ 
of periodic traveling wave solutions of the classic KdV equation ($\alpha=2$ and $p=1$),
while Deconinck and Nivala 
\cite{DN1,DN2}
established such a result for the modified KdV equation ($\alpha=2$ and $p=2$).
For more general power law nonlinearities, when $\alpha=2$ Bronski and Johnson \cite{BrJ}analyzed the spectral
stability of such periodic traveling waves of \eqref{e:gkdv} and derived there a geometric index for the stability
and instability of such waves to perturbations in $L^2_{\rm per}([0,nT])$ with $n\gg 1$, while Haragus and Kapitula \cite{HK}
established the spectral stability of such waves of sufficiently small amplitude when $p<2$ and spectral
instability when $p>2$; as mentioned previously, it is this class of small amplitude periodic waves that we will be concerned with
here.

It is important to note that all of the periodic stability analyses described above are valid only in the local
case $\alpha=2$.  Such results for $\alpha\in(0,2)$ seem to be very few.  Most notably, in \cite{AN}
Angulo and Natali investigate the nonlinear stability of periodic traveling wave solutions to equations of the form
\eqref{e:kdv} when subject to perturbations with the same period as the underlying wave\footnote{In fact, they 
consider more general nonlocal dispersive operators than what are considered here.}.  In particular, they
establish the nonlinear stability of periodic traveling waves of the Benjamin-Ono equation, corresponding
to $\alpha=1$ and $p=1$ in \eqref{e:kdv} to perturbations with the same period as the underlying wave. 
More recently, Hur and Johnson in \cite{HJ} verified for $\alpha\in(\frac{1}{3},2]$ and $p=1$ the nonlinear stability of $T$-periodic traveling wave solutions
of \eqref{e:kdv} to $T$-periodic perturbations when the underlying wave arises as a constrained energy minimizer.
As far as the author is aware, these analyses are the only rigorous results
concerning the nonlinear stability of periodic waves in equations of the form \eqref{e:kdv} 
when 
the dispersive operator is nonlocal.
Furthermore, the author is not aware of any rigorous results concerning the spectral stability
of periodic waves in equations of the form \eqref{e:kdv} when $\alpha\neq 2$.  

It is the intent of the  current paper to investigate the spectral stability of periodic
traveling wave solutions of \eqref{e:kdv} of sufficiently small amplitude when subject to
arbitrarily small localized, i.e. integrable, perturbations.  Our main result, stated in Theorem \ref{t:kdvmain} below,
states that such small amplitude periodic traveling wave trains are spectrally
stable if $\alpha>1$ and $1\leq p<p^*(\alpha)$, where the function $p^*(\alpha)$ is defined
explicitly in \eqref{e:critpower} below, and is spectrally unstable if either $\alpha\in(\frac{1}{2},1)$
or $\alpha>1$ and $p>p^*(\alpha)$.  A plot of $p^*(\alpha)$ is givein in Figure \ref{f:critp}, and from
this it is evident that for $p$ sufficiently large all such small periodic traveling wave solutions 
are necessarily spectrally unstable.  Furthermore, for $p\in\left(1,P_{\rm max}\right)$, where $P_{\rm max}:=\max_{\alpha\geq 1}p^*(\alpha)\approx 2.19$,
there exist numbers $\alpha_-(p)<\alpha_+(p)$ such that such small periodic traveling waves
are spectrally stable provided $\alpha\in(\alpha_-(p),\alpha_+(p))$ and are spectrally unstable
if $\alpha\notin[\alpha_-(p),\alpha_+(p)]$.  The fact that there is an upper bound the on the admissible dispersion parameters $\alpha$
corresponding to stability for a given $p\in(1,P_{\rm max})$ is a striking new feature in the periodic stability analysis
of models of the form \eqref{e:kdv}, and it stands in direct contrast with the solitary wave theory.  It is likely that
this is due to the fundamental difference between the nature of disperison on the line and ``dispersion" on the circle: here, we do not
attempt to give an explanation for this difference, and leave this instead as an interesting open question.

\begin{figure}[htbp]
\centering
\includegraphics{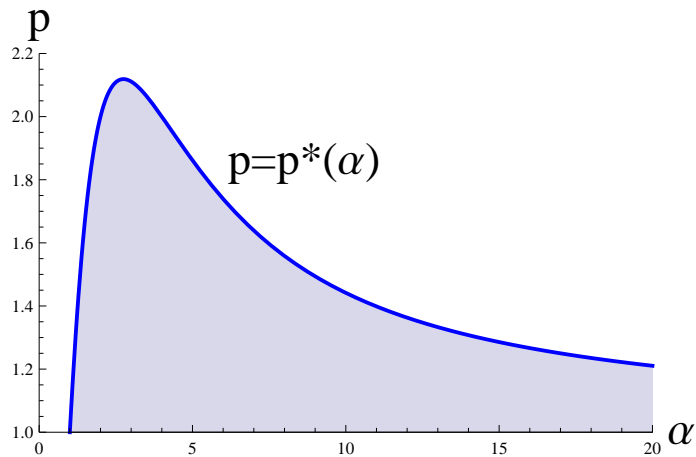}
\caption{A plot of the critical nonlinearity $p^*(\alpha)$ as a function of the dispersion parameter $\alpha$,
plotted for $\alpha\geq 1$.  Notice that $p^*(\alpha)$ decreases monotonically to $1$ as $\alpha\to\infty$.  
The shaded area corresponds to the region of stability for the small amplitude
periodic traveling waves considered here.  For $\alpha\in(1/2,1)$, we see $p^*(\alpha)<1$ and hence all such small periodic
traveling waves are spectrally unstable.}
\label{f:critp}
\end{figure}

Concerning specific models of the form \eqref{e:kdv}, in the classical case $\alpha=2$, Theorem \ref{t:kdvmain} 
recovers the result of Haragus and Kapitula \cite{HK} that such waves are spectrally stable if $1\leq p<2$ and are
spectrally unstable if $p>2$.  Furthermore, since $p^*(1)=1$, it follows that our analysis is not sufficient to conclude
spectral stability or instability in the classical Benjamin-Ono equation, corresponding to $\alpha=p=1$ in
\eqref{e:kdv}: see Remark \ref{r:bo} for more details.  Finally, fixing the nonlinearity $p$ instead, we see
that when $p=1$ all such small periodic traveling waves are spectrally stable for all $\alpha>1$, while when
$p=2$ such waves are spectrally stable if $\alpha\in(2,4)$ and are spectrally unstable if $\alpha\notin[2,4]$.
It is important to note that our analysis demonstrates that \emph{all} such small $T$-periodic traveling
wave solutions of \eqref{e:kdv} are spectrally stable to small $T$-periodic perturbations,
so that the spectral instability detected in Theorem \ref{t:kdvmain}
is necessarily of sideband type, occurring in $L^2_{\rm per}([0,nT])$ for $n> 1$, 
and is hence impossible to detect by co-periodic stability analyses.

While our analysis is similar to that given in \cite{HK} in the local case $\alpha=2$, relying
on perturbation arguments of the spectrum from the easily analyzed constant state, it is complicated by the facts
that (1) the existence theory no longer follows by elementary phase plane analysis, and (2) the absence of a 
suitable Floquet theory for nonlocal differential equations with periodic coefficients, which is necessary
to analyze the essential spectrum of the linearized operators obtained from linearizing \eqref{e:kdv} about
a given periodic traveling wave.  In the forthcoming analysis, we resolve (1) by using a Lyapunov-Schmidt reduction
argument similar to that given in \cite{HLS} in the context of the fifth-order Kawahara equation.
For (2), we utilize the inverse Bloch-Fourier representation of functions in $L^2(\RM)$, which
is well known in the analysis of Schr\"odinger operators with periodic potentials \cite{RS} and has been
extensively used in the stability analysis of periodic wave trains in dissipative systems (see \cite{BJNRZ,JNRZ1,JNRZ2,JNRZ3,G,M} 
and references therein), 
to show that, even in this nonlocal setting, the essential spectrum of the linearized operators acting on $L^2(\RM)$ can
be continuously parameterized by the eigenvalues of a one-parameter family of Bloch operators acting
on a periodic domain.  This spectral characterization extends that introduced by Gardner \cite{G}
in the local setting, and is valid for considerably more general nonlocal operators than
what is considered here.

The outline of this paper is as follows.  In Section \ref{s:existence}, we prove the existence and determine asymptotic
expansions of periodic traveling wave solutions of \eqref{e:kdv} of sufficiently small amplitude.  The existence
argument is based on an appropriate Lyapunov-Schmidt reduction argument, the details of which are included in Appendix \ref{a:exist}.
Section \ref{s:stability} contains our main stability results, beginning with a careful characterization of the essential
spectrum of the linearized operators acting on $L^2(\RM)$ in terms of the eigenvalues of a one-parameter family
of Bloch operators acting on periodic functions.  We then use spectral perturbation arguments to analyze
the spectrum of the small amplitude periodic wave by considering the associated Bloch operators as small
perturbations of those with constant coefficients obtained from linearizing about the nearby constant state.  As such,
the restriction to periodic waves of sufficiently small amplitude is essential in our argument.  In particular, our analysis
gives no information about the stability or instability of periodic waves with large amplitude when subject
to small localized perturbations.  Finally, we conclude with an appendix in which we give the proofs of the
existence result and the Bloch-wave decomposition.

\section{Existence of Periodic Traveling Waves}\label{s:existence}

In this section we analyze the set of periodic traveling wave solutions of \eqref{e:kdv} of the 
form
\[
u(x,t)=u(x-ct),\quad c\in\RM,
\]
where the function $u(\cdot)$ is a real-valued periodic function of its argument.
Due to the scaling properties of \eqref{e:kdv} we can with out loss of generality assume that $c=1$,
in which case such solutions of \eqref{e:kdv} arise as stationary solutions of the PDE
\begin{equation}\label{e:travkdv}
u_{t}+\left(-\Lambda^\alpha u-u+u^{p+1}\right)_{x}=0,
\end{equation}
or, equivalently, after performing a single integration, solutions of the nonlocal profile equation
\be\label{e:travode}
-\Lambda^\alpha u-u+u^{p+1}=b
\ee
where $b\in\RM$ is a constant of integration taken to be in general non-zero.  

We begin by considering the equilibrium solutions of \eqref{e:travode} for $|b|\ll 1$.  Notice when $b=0$, there are
in general two non-negative equilibrium solutions $u=0$ and $u=1$.  In the classical case
when $\alpha=2$, it follows by straight forward phase plane analysis that in the $(u,u')$ phase
plane, $u=0$ is a saddle point associated with a homoclinic orbit (solitary wave), while the equilibrium
$u=1$ is a nonlinear center.  Thus, when $\alpha=2$ and $b=0$ it is clear that there exists a one-parameter family of periodic orbits
$\left\{(u_\gamma,u'_\gamma)\right\}_{\gamma\in[0,1)}$ of period $T_\gamma$ such that
\[
\lim_{\gamma\to 0^+}T_\gamma=\frac{2\pi}{\sqrt{p}},\quad\lim_{\gamma\to 1^-}T_\gamma=+\infty.
\]
Moreover, these qualitative features persist for $|b|\ll 1$.

When $\alpha\neq 2$, however, this classical picture breaks down as we can no longer make
sense of the phase space in the same way.  Nevertheless, for $|b|\ll 1$ there exists a unique equilibrium
solution $u=Q_b$ of \eqref{e:travode} continuing from $Q_0=1$; indeed, a straightforward calculation
provides the expansion
\[
Q_b=1+\frac{1}{p}b-\frac{p+1}{2p^2}b^2+\mathcal{O}(|b|^3)
\]
valid for $|b|\ll 1$.  Seeking nearby periodic solutions of \eqref{e:travode}, we set $u(x)=P(kx)$, where $P$ 
is $2\pi$-periodic and $k>0$ denotes the wave number, and require that $P$ and $k$ satisfy
the rescaled nonlocal profile equation
\[
-k^\alpha\Lambda^\alpha P-P+P^{p+1}=b
\]
posed on a $2\pi$-periodic domain.  Here, we consider for each $s>0$ the operator $\Lambda^s$ as being defined on the torus.  
In particular,
we consider $\Lambda^s$ as a closed operator on $L^2(\RM/2\pi\ZM)$ with
dense domain $H^s(\RM/2\pi\ZM)$ being defined via Fourier series as 
\begin{equation}\label{e:fourier}
\Lambda^sf(x)=\sum_{k\in\ZM\setminus\{0\}}|k|^se^{ikx}\hat{f}(k),~~s\geq 0.
\end{equation}
It is clear from this definition that $\Lambda^s$ is invertible for each $s>0$ when restricted
to the mean-zero subspace of $H^s(\RM/2\pi\ZM)$, and we define its inverse $\Lambda^{-s}$ on this subspace
via Fourier series as
\[
\Lambda^{-s}f(x)=\sum_{k\in\ZM\setminus\{0\}}\frac{e^{ikx}\hat{f}(k)}{|k|^s},~~s>0;
\]
see \cite{RSt} for a recent interesting discussion of $\Lambda^s$ acting on the torus.
With these definitions, for $\alpha>\frac{1}{2}$ one can use a Lyapunov-Schmidt reduction to verify the existence
of a two-parameter family of small amplitude even periodic traveling wave solutions $u_{a,b}$ of \eqref{e:travode}
existing in a neighborhood of the equilibrium solution $u=Q_b$; see Appendix \ref{a:exist} for details.

In the parameterization of the periodic solutions $u_{a,b}(x)=P_{a,b}(k_{a,b}x)$ of \eqref{e:travode} given by Theorem \ref{t:existence},
the functions $P_{a,b}$ are $2\pi$-periodic even solutions of the rescaled nonlocal profile equation
\begin{equation}\label{e:profileeqn}
-k_{a,b}^{\alpha}\Lambda^\alpha v-v+v^{p+1}=b
\end{equation}
such that $P_{0,b}=Q_b$ and $k_{0,b}^\alpha=(p+1)Q_b^p-1$; here, we have used
the fact that $k_{a,b}>0$ for $|(a,b)|\ll 1$.  Furthermore,
the parameter $a$ is precisely the first Fourier coefficient $P_{a,b}$, 
and we have the relations $P_{a,b}(z+\pi)=P_{-a,b}(z)$ and $k_{a,b}=k_{-a,b}$ valid for all $|(a,b)|\ll 1$.
Taking into account the translation invariance of \eqref{e:travode}, it follows that for a fixed $\alpha>\frac{1}{2}$ 
we can find a three-parameter family of small amplitude periodic traveling wave solutions of 
\eqref{e:travkdv}.

A key feature of the functions $P_{a,b}$ and $k_{a,b}$ described in Theorem \ref{t:existence} is that these depend
analytically on $a$ and $b$ for $|(a,b)|\ll 1$.  The next lemma exploits this fact
to provide us with explicit expansions of the functions $P_{a,b}$ and $k_{a,b}$ valid for sufficiently small
$a$ and $b$.  These expansions are crucial in the forthcoming stability analysis.

\begin{lemma}\label{l:smallampexpand}
For sufficiently small $a,b\in\RM$ and $\alpha>\frac{1}{2}$, the functions $P_{a,b}$ and $k_{a,b}$
in Theorem \ref{t:existence} can be expanded as
\begin{align*}
P_{a,b}(x)&=Q_b+\cos(z)a+\frac{p+1}{4}\left(\frac{1}{2^\alpha-1}\cos(2z)-1\right)a^2+\mathcal{O}(|a|(a^2+b^2))\\
k_{a,b}^\alpha &=k_{0,b}^\alpha-\frac{p(p+1)(2^\alpha(p+3)-2(p+2))}{8(2^\alpha-1)}a^2+\mathcal{O}(|a|^3+|b|^3),
\end{align*}
where $k_{0,b}^\alpha=(p+1)Q_b^p-1$.
\end{lemma}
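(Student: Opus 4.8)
The plan is to run a Stokes-type (Lyapunov--Schmidt) expansion in the amplitude parameter $a$, exploiting the analyticity of $P_{a,b}$ and $k_{a,b}$ in $(a,b)$ guaranteed by Theorem \ref{t:existence}. Writing $z$ for the $2\pi$-periodic variable, I expand
\[
P_{a,b}(z)=\sum_{j\ge 0}a^{j}\phi_{j}(z;b),\qquad k_{a,b}^{\alpha}=\sum_{j\ge 0}a^{j}\kappa_{j}(b),
\]
as convergent power series for $|(a,b)|$ small, with each $\phi_{j}(\cdot;b)$ even and $2\pi$-periodic, $\phi_{0}=Q_{b}$, and $\kappa_{0}=k_{0,b}^{\alpha}=(p+1)Q_{b}^{p}-1$. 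Two structural facts are used throughout. First, the symmetries $P_{a,b}(z+\pi)=P_{-a,b}(z)$ and $k_{a,b}=k_{-a,b}$ recorded before the lemma force $\phi_{j}(z+\pi;b)=(-1)^{j}\phi_{j}(z;b)$ and $\kappa_{2\ell+1}\equiv 0$; in particular $\phi_{1}$ carries only odd Fourier harmonics, $\phi_{2}$ only even ones, and $\kappa_{1}=0$. Second, since $a$ is by construction the first Fourier coefficient of $P_{a,b}$ and $Q_{b}$ is constant, the $\cos z$-component of $\phi_{1}$ equals $1$ while that of $\phi_{j}$ vanishes for all $j\ge 2$; this is what will exclude the homogeneous solutions $\cos z,\sin z$ at higher orders.

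Next I substitute the ansatz into the rescaled profile equation \eqref{e:profileeqn}, using that $\Lambda^{\alpha}$ annihilates constants (see \eqref{e:fourier}), and collect equal powers of $a$. Order $a^{0}$ merely recovers $Q_{b}^{p+1}-Q_{b}=b$ and $\kappa_{0}=(p+1)Q_{b}^{p}-1$, i.e.\ nothing beyond Theorem \ref{t:existence}. At order $a^{1}$, after inserting $\kappa_{0}=(p+1)Q_{b}^{p}-1$, the equation collapses to $\kappa_{0}(1-\Lambda^{\alpha})\phi_{1}=0$; since $\kappa_{0}\to p>0$ as $b\to 0$ and $\ker(1-\Lambda^{\alpha})=\mathrm{span}\{\cos z,\sin z\}$ on the torus, evenness and the normalization give $\phi_{1}(z;b)=\cos z$ \emph{exactly}, for all small $b$. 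At order $a^{2}$ the equation becomes
\[
\kappa_{0}(1-\Lambda^{\alpha})\phi_{2}=-\tfrac{p(p+1)}{2}Q_{b}^{p-1}\cos^{2}z=-\tfrac{p(p+1)}{4}Q_{b}^{p-1}\bigl(1+\cos 2z\bigr),
\]
whose right-hand side is orthogonal to $\ker(1-\Lambda^{\alpha})$, hence uniquely solvable in the even, $\cos z$-free class by inverting mode by mode ($1-\Lambda^{\alpha}$ acts as $1$ on the constant mode and as $1-2^{\alpha}$ on $\cos 2z$). Evaluating the resulting $\phi_{2}$ at $b=0$ (where $Q_{0}=1$, $\kappa_{0}=p$) gives $\phi_{2}(z;0)=\tfrac{p+1}{4}\bigl(\tfrac{1}{2^{\alpha}-1}\cos 2z-1\bigr)$.

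To pin down $\kappa_{2}(0)$ I go one order further. Using $\Lambda^{\alpha}\cos z=\cos z$, the order-$a^{3}$ equation reads
\[
\kappa_{0}(1-\Lambda^{\alpha})\phi_{3}=\kappa_{2}\cos z-p(p+1)Q_{b}^{p-1}\cos z\,\phi_{2}-\tfrac{(p+1)p(p-1)}{6}Q_{b}^{p-2}\cos^{3}z,
\]
and the Fredholm alternative for $\kappa_{0}(1-\Lambda^{\alpha})$ forces the right-hand side to have vanishing $\cos z$-component. Extracting that component through $\cos z\cos 2z=\tfrac12(\cos z+\cos 3z)$ and $\cos^{3}z=\tfrac34\cos z+\tfrac14\cos 3z$ yields a linear equation for $\kappa_{2}$; at $b=0$ it simplifies, after combining the fractions over the common denominator $2^{\alpha}-1$, to $\kappa_{2}(0)=-\dfrac{p(p+1)\bigl(2^{\alpha}(p+3)-2(p+2)\bigr)}{8(2^{\alpha}-1)}$. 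Finally I absorb the remaining $b$-dependence and the higher-order tails into the stated remainders: $\phi_{1}=\cos z$ contributes no error, while $\phi_{2}(z;b)=\phi_{2}(z;0)+\mathcal{O}(b)$ and $\kappa_{2}(b)=\kappa_{2}(0)+\mathcal{O}(b)$ by analyticity, and elementary inequalities such as $|a|^{2}|b|\le|a|(a^{2}+b^{2})$ and $|a|^{2}|b|\le|a|^{3}+|b|^{3}$ show that $\sum_{j\ge 3}a^{j}\phi_{j}$ and $\sum_{j\ge 3}a^{j}\kappa_{j}$ together with the $b$-corrections are $\mathcal{O}(|a|(a^{2}+b^{2}))$ and $\mathcal{O}(|a|^{3}+|b|^{3})$ respectively. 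The only genuinely delicate point is the third-order solvability computation --- correctly reading off the $\cos z$-Fourier coefficient of the quadratic and cubic interaction terms and verifying that the resulting expression collapses to exactly the claimed form --- whereas the factorization of the linearization at the constant state as $\kappa_{0}(1-\Lambda^{\alpha})$, forced by $\kappa_{0}=(p+1)Q_{b}^{p}-1$, is what makes every other step fully explicit.
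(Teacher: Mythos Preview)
Your argument is correct and follows essentially the same Stokes-expansion route as the paper: substitute a power-series ansatz into the rescaled profile equation, solve the hierarchy mode by mode using that the linearization about $Q_b$ factors as $\kappa_0(1-\Lambda^\alpha)$, and read off $\kappa_2$ from the solvability condition at order $a^3$. The paper carries this out at $b=0$ and then remarks that the case $|b|\ll 1$ is similar, whereas you keep $b$ general throughout, invoke the symmetries $P_{a,b}(z+\pi)=P_{-a,b}(z)$ and $k_{a,b}=k_{-a,b}$ explicitly to kill $\kappa_1$, and bound the remainders via elementary inequalities; these are cosmetic differences rather than a genuinely different method.
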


\begin{proof}
Taking $b=0$ for now and recalling Theorem \ref{t:existence}, 
we begin with the following small amplitude ansatz for $P_{a,0}$ and $k_{a,0}$
\begin{align*}
P_{a,0}(z)&=1+a\cos(z)+a^2v_2(z)+a^3v_3(z)+\mathcal{O}(a^4)\\
k_{a,0}^\alpha&=p+a^2k_1+\mathcal{O}(a^4)
\end{align*}
where each $v_j$ is even and $2\pi$-periodic in the $z$ variable.  Substituting these expansions
into the profile equation \eqref{e:profileeqn} yields a hierarchy of compatibility conditions.
The $\mathcal{O}(1)$ equation is trivially satisfied, while the $\mathcal{O}(|a|)$ equation reads
\[
\left(-\Lambda^\alpha +1\right)\cos(z)=0
\]
which again holds.
The $\mathcal{O}(a^2)$ equation now reads
\[
-\Lambda^\alpha v_2+v_2=-\frac{p(p+1)}{4}\left(1+\cos(2z)\right)
\]
which, using \eqref{e:fourier}, is seen to have even solutions of the form
\[
v_2(z)=\frac{p(p+1)}{4}\left(\frac{1}{2^\alpha-1}\cos(2z)-1\right)+A\cos(z)
\]
for any $A\in\RM$.
Notice, however, that we must take $A=0$ by the definition of the parameter $a$.
Continuing to the $\mathcal{O}(a^3)$ equation, we find
\begin{align*}
p\left(-\Lambda^\alpha+1\right)v_3&=k_1\cos(z)-\frac{p(p+1)^2}{8(2^\alpha-1)}\left(\cos(z)+\cos(3z)\right)+\frac{p(p+1)^2}{4}\cos(z)\\
&\quad-\frac{p(p-1)(p+1)}{24}\left(3\cos(z)+\cos(3z)\right),
\end{align*}
which is readily seen to have a resonant solution containing a term proportional to $z\sin(z)$ unless 
\[
k_1=\frac{p(p+1)^2}{8(2^\alpha-1)}-\frac{p(p+1)^2}{4}+\frac{p(p-1)(p+1)}{8},
\]
i.e. unless $k_1$ is chosen so that
\[
k_1=-\frac{p(p+1)\left(2^\alpha(p+3)-2(p+2)\right)}{8(2^\alpha-1)}.
\]
This completes the expansions to the desired order when $b=0$.  The expansions
when $|b|\ll 1$ are obtained similarly.
\end{proof}

\section{Spectral Stability to Localized Perturbations}\label{s:stability}

We are now ready to begin discussing the stability
analysis of the small amplitude periodic solutions $P_{a,b}(k_{a,b}\cdot)$ constructed in the previous section under the flow induced
by the PDE \eqref{e:travkdv}.  
As mentioned in the introduction, we are interested here in the spectral
stability of such waves when subject to small localized, i.e. integrable, perturbations on $\RM$.  In the classical
case when $\alpha\in 2\mathbb{N}$, the associated spectral problem obtained from linearizing about a given
wave $P_{a,b}$ is that for an ordinary differential equation with periodic coefficients.  As such,
it is an easy calculation to see that the spectrum of this linearized operator, considered as an operator on $L^2(\RM)$
is purely essential and agrees with the continuous spectrum.  In this classical case, a common method
for analyzing the essential spectrum of the linearization is to use Floquet theory to provide a
continuous parameterization of the essential spectrum by the eigenvalues of an associated one-parameter
family of linear operators considered with periodic boundary conditions; see \cite{BrJ,BrJZ,HK}, for example.

When $\alpha>\frac{1}{2}$ is not an even natural number, however, the linearized operator associated with $P_{a,b}$ is nonlocal and hence
the classical Floquet theory does not apply.  Nevertheless, we find in the next section that
we can still reduce the spectral stability problem on $L^2(\RM)$ to a one-parameter family
of eigenvalue problems with periodic boundary conditions.  This characterization of the essential
spectrum is given by utilizing the Floquet-Bloch transform defined on $L^2(\RM)$.  Once this characterization
is established, we use an appropriate spectral perturbation theory to analyze the eigenvalues
of the associated one-parameter family of Bloch operators when $|(a,b)|\ll 1$

\subsection{Characterization of the Essential Spectrum}

To analyze the spectral stability of the small amplitude periodic waves obtained in the previous section,
we fix $\alpha>\frac{1}{2}$ and set $z=k_{a,b}x$ and $s=k_{a,b}t$ in \eqref{e:travkdv} to get
\[
v_s+(-|k_{a,b}|^\alpha\Lambda^\alpha v-v+v^{p+1})_z,
\]
where now $\Lambda$ acts on $\RM$ in the $z$-variable.  Linearizing about $P_{a,b}$ and considering solutions
of the linearized equation of the form $v(z,t)=e^{\lambda t}v(z)$, with $\lambda\in\CM$ and $v(\cdot)\in L^2(\RM)$,
leads to the spectral problem
\[
\mathcal{M}_{a,b}v:=\partial_z\mathcal{L}_{a,b}v=\lambda v
\]
considered on $H^{\alpha+1}(\RM)$, where here $\lambda$ denotes the spectral parameter and
\[
\mathcal{L}_{a,b}:=|k_{a,b}|^\alpha\Lambda^\alpha+1-(p+1)P_{a,b}^p
\]
is considered as a closed, densely defined operator acting on $L^2(\RM)$.  In this case, spectral stability
is defined by the condition that the operator $\mathcal{M}_{a,b}$ have no spectrum in the open right half 
plane.  Notice, however, that the Hamiltonian form of the spectral problem\footnote{More precisely, the fact that
the spectral problem takes the form $J\mathcal{L}v=\lambda v$ where $v$ belongs to some Hilbert space $X$ and where 
$J$ is skew symmetric and $\mathcal{L}$ is self adjoint when acting on $X$.} implies the spectrum is symmetric with
respect to the real and imaginary axes, and hence spectral stability is equivalent with all the spectrum
of the operator $\mathcal{M}_{a,b}$ being confined to the imaginary axis; we will discuss this more in the next section.

In order to analyze the spectrum of the linear operator $\mathcal{M}_{a,b}$ acting on $L^2(\RM)$, we recall
now some 
facts about the Floquet-Bloch decomposition of $L^2(\RM)$; as mentioned
above, the standard Floquet theory does not apply for $\alpha\notin 2\mathbb{N}$ since the spectral problem
for $\mathcal{M}_{a,b}$ is not in the form of an ordinary differential equation.  To this end, 
notice
that given any $v\in L^2(\RM)$ we can express $v$ in terms of its inverse Bloch representation as
\[
v(x)=\int_{-1/2}^{1/2}e^{i\xi x}\check{v}(\xi,x)d\xi
\]
where $\check{v}(\xi,x):=\sum_{k\in\ZM}e^{ikx}\hat{v}(\xi+k)$ are $2\pi$-periodic functions of $x$ and where
$\hat{v}(\omega):=\frac{1}{2\pi}\int_{\RM}e^{-i\omega z}v(z)dz$ denotes the standard Fourier transform of $v$.  Indeed,
the above formulas may be easily checked on the Schwartz class by grouping frequencies which differ by one in the standard Fourier
transform representation of $v$:
\[
 v(z)
   =\sum_{j\in\ZM}\int_{-1/2}^{1/2}e^{i(k+j)z}\hat{v}(k+j)dk=\int_{-1/2}^{1/2}e^{ikz}\check{v}(\xi,z)dz.
\]
The Bloch transform $\mathcal{B}:L^2(\RM)\to L^2([-1/2,1/2);L^2(\RM/2\pi\ZM))$ given
by $\mathcal{B}(v)(\xi,x):=\check{v}(\xi,x)$ is then well defined, bijective, and continuous; in fact,
using the classical Parseval theorem we find for all $v\in L^2(\RM)$ that
\begin{equation}\label{par}
\|v\|_{L^2(\RM)}^2=2\pi\int_{-1/2}^{1/2}\int_{0}^{2\pi}\left|\mathcal{B}(v)(\xi,z)\right|^2dz~d\xi
\end{equation}
so that the rescaled Bloch transform $\sqrt{2\pi}\mathcal{B}$ is an isometry on $L^2(\RM)$\footnote{This is in fact a special case of a more general class of generalized
Hausdorff-Young type inequalities, following from interpolating \eqref{par} with the triangle inequality, that are known to be satisfied by the Bloch transform; see \cite{JZ} for details.}.

Furthermore, we find given
any $v\in L^2(\RM)$ that
\[
\mathcal{B}\left(\mathcal{M}_{a,b}v\right)(\xi,x)=\mathcal{M}_{a,b,\xi}\left(\check{v}(\xi,\cdot)\right)(x)
\]
where $\mathcal{M}_{a,b,\xi}:H^{\alpha+1}(\RM/2\pi\ZM)\subset L^2(\RM/2\pi\ZM)\to L^2(\RM/2\pi\ZM)$ is a defined
by 
\[
\mathcal{M}_{a,b,\xi}:=e^{-i\xi x}\mathcal{M}_{a,b}e^{i\xi x},\quad\xi\in[-1/2,1/2).
\]
The operators $\mathcal{M}_{a,b,\xi}$ are called the Bloch operators associated to $\mathcal{M}_{a,b}$,
and from above they
correspond to operator-valued symbols of $\mathcal{M}_{a,b}$ under $\mathcal{B}$
acting on $L^2(\RM/2\pi\ZM)$.  The next result relates the spectrum of $\mathcal{M}_{a,b}$ acting on $L^2(\RM)$
to that of the corresponding one-parameter family of Bloch operators $\left\{\mathcal{M}_{a,b,\xi}\right\}_{\xi\in[-1/2,1/2)}$,
providing us with a Floquet-like theory that is suitable for our needs.

\begin{proposition}\label{bloch}
Consider the operator $\mathcal{M}_{a,b}$ acting on $L^2(\RM)$ with domain $H^{\alpha+1}(\RM)$ and the associated Bloch operators
$\left\{\mathcal{M}_{a,b,\xi}\right\}_{\xi\in[-1/2,1/2)}$ acting on $L^2(\RM/2\pi\ZM)$ with domain
$H^{\alpha+1}(\RM/2\pi\ZM)$.  Then for any $\lambda\in\CM$, the following statements are equivalent:
\begin{itemize}
\item[(i)] $\lambda$ belongs to the spectrum of the closed operator $\mathcal{M}_{a,b}$ acting on $L^2(\RM)$.
\item[(ii)] There exists a $\xi\in[-1/2,1/2)$ such that $\lambda$ belongs to the spectrum of the closed
operator $\mathcal{M}_{a,b,\xi}$ acting on $L^2(\RM/2\pi\ZM)$.
\item[(iii)] There exists a nonzero function $V\in L^2(\RM/2\pi\ZM)$ of the form $V(z)=e^{i\xi z}v(z)$
for some $\xi\in[-1/2,1/2)$ and $v\in H^{\alpha+1}(\RM/2\pi\ZM)$ such that $\left(\mathcal{M}_{a,b}-\lambda{\bf I}\right)V=0$.
\end{itemize}
\end{proposition}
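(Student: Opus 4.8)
\emph{Approach.} The plan is to exploit the intertwining identity $\mathcal{B}(\mathcal{M}_{a,b}v)(\xi,\cdot)=\mathcal{M}_{a,b,\xi}\bigl(\check v(\xi,\cdot)\bigr)$ together with the (rescaled) unitarity of $\mathcal{B}$ recorded above, which realize $\mathcal{M}_{a,b}$ as the direct integral $\int^{\oplus}_{[-1/2,1/2)}\mathcal{M}_{a,b,\xi}\,d\xi$ of the Bloch operators, and then to run the standard spectral theory of decomposable operators, taking care that the fibres are neither self-adjoint nor local. Two structural properties of the fibres should be isolated first. Writing $\Lambda^\alpha_\xi$ for the Fourier multiplier $k\mapsto|k+\xi|^\alpha$ on $L^2(\RM/2\pi\ZM)$ and $\mathcal{L}_{a,b,\xi}:=|k_{a,b}|^\alpha\Lambda^\alpha_\xi+1-(p+1)P_{a,b}^p$, conjugation by $e^{i\xi z}$ gives $\mathcal{M}_{a,b,\xi}=(\partial_z+i\xi)\mathcal{L}_{a,b,\xi}$ on the $\xi$-independent domain $H^{\alpha+1}(\RM/2\pi\ZM)$. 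First, each $\mathcal{M}_{a,b,\xi}$ has compact resolvent, hence purely discrete spectrum consisting of eigenvalues of finite algebraic multiplicity: it differs from the Fourier multiplier $(\partial_z+i\xi)(|k_{a,b}|^\alpha\Lambda^\alpha_\xi+1)$, whose symbol grows like $|k|^{\alpha+1}$ and which therefore has compact resolvent, by the term $-(p+1)(\partial_z+i\xi)(P_{a,b}^p\,\cdot\,)$, which maps $H^{\alpha+1}(\RM/2\pi\ZM)$ into $H^{\alpha}(\RM/2\pi\ZM)\hookrightarrow L^2(\RM/2\pi\ZM)$ compactly, and compact resolvent is stable under such relatively compact perturbations. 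Second, $\xi\mapsto\mathcal{M}_{a,b,\xi}$ is norm-continuous as a map into the bounded operators from $H^{\alpha+1}(\RM/2\pi\ZM)$ to $L^2(\RM/2\pi\ZM)$, since $\sup_{k\in\ZM}\langle k\rangle^{-\alpha}\bigl||k+\xi|^\alpha-|k+\xi'|^\alpha\bigr|\to 0$ as $\xi'\to\xi$; hence $\xi\mapsto(\mathcal{M}_{a,b,\xi}-z)^{-1}$ is norm-continuous on the resolvent set. Finally, conjugation by the multiplication operator $e^{iz}$ on $L^2(\RM/2\pi\ZM)$ carries $\mathcal{M}_{a,b,\xi+1}$ onto $\mathcal{M}_{a,b,\xi}$, so all spectral data are $1$-periodic in $\xi$ and the Floquet parameter effectively ranges over the compact circle $\RM/\ZM$.

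\emph{Equivalence of (ii) and (iii).} This is essentially formal once the discreteness above is in hand. If $V(z)=e^{i\xi z}v(z)$ with $v\in H^{\alpha+1}(\RM/2\pi\ZM)$ nonzero and $(\mathcal{M}_{a,b}-\lambda)V=0$, then conjugating by $e^{i\xi z}$ gives $(\mathcal{M}_{a,b,\xi}-\lambda)v=0$, so $\lambda$ is an eigenvalue of $\mathcal{M}_{a,b,\xi}$ and (ii) holds. Conversely, if $\lambda\in\spec(\mathcal{M}_{a,b,\xi})$ for some $\xi$ then, by discreteness, $\lambda$ is an eigenvalue with an eigenfunction $v\in H^{\alpha+1}(\RM/2\pi\ZM)\setminus\{0\}$; a routine elliptic bootstrap applied to $(\partial_z+i\xi)\mathcal{L}_{a,b,\xi}v=\lambda v$ upgrades $v$ to a smooth function, and $V:=e^{i\xi z}v$ then solves $(\mathcal{M}_{a,b}-\lambda)V=0$, giving (iii).

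\emph{Equivalence of (i) and (ii).} For the direction (ii)$\Rightarrow$(i) I would construct a singular (Weyl) sequence. Given $\lambda\in\spec(\mathcal{M}_{a,b,\xi_0})$, the previous step produces a bounded, smooth, quasi-periodic solution $V=e^{i\xi_0 z}v$ of $(\mathcal{M}_{a,b}-\lambda)V=0$ on $\RM$ which is not in $L^2(\RM)$. Choosing $\chi\in C^\infty_c(\RM)$ with $\chi\equiv 1$ near the origin and setting $V_n:=\chi(\,\cdot\,/n)V$, one has $\|V_n\|_{L^2(\RM)}\sim c\sqrt{n}\to\infty$ by the periodicity of $|V|^2$, while $(\mathcal{M}_{a,b}-\lambda)V_n=[\mathcal{M}_{a,b},\chi(\,\cdot\,/n)]V$. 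Every term of this commutator carries a derivative of $\chi(\,\cdot\,/n)$, hence a factor $n^{-1}$, and is effectively concentrated in an annulus of width $O(n)$: the contributions from $\partial_z$ and from the multiplication operator are local and at once of size $O(n^{-1/2})$ in $L^2(\RM)$, while the genuinely nonlocal contribution $[\Lambda^\alpha,\chi(\,\cdot\,/n)]V$ is controlled using the pseudodifferential commutator calculus for $\Lambda^\alpha$ (equivalently, the $|z-z'|^{-1-\alpha}$ decay of its kernel) together with the smoothness and uniform boundedness of $V$; all told $\|(\mathcal{M}_{a,b}-\lambda)V_n\|_{L^2(\RM)}=o(\sqrt{n})=o(\|V_n\|_{L^2(\RM)})$, whence $\lambda\in\spec(\mathcal{M}_{a,b})$. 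For (i)$\Rightarrow$(ii) I would argue by contraposition: if $\lambda\notin\spec(\mathcal{M}_{a,b,\xi})$ for every $\xi$, then norm-continuity of the fibre resolvents and compactness of $\RM/\ZM$ give $\sup_{\xi}\bigl(\|(\mathcal{M}_{a,b,\xi}-\lambda)^{-1}\|_{L^2\to L^2}+\|(\mathcal{M}_{a,b,\xi}-\lambda)^{-1}\|_{L^2\to H^{\alpha+1}}\bigr)<\infty$, so that $R:=\mathcal{B}^{-1}\bigl(\int^{\oplus}(\mathcal{M}_{a,b,\xi}-\lambda)^{-1}\,d\xi\bigr)\mathcal{B}$ is a bounded operator on $L^2(\RM)$ mapping into $H^{\alpha+1}(\RM)$; the identities $R(\mathcal{M}_{a,b}-\lambda)=I$ on $H^{\alpha+1}(\RM)$ and $(\mathcal{M}_{a,b}-\lambda)R=I$ on $L^2(\RM)$ follow fibrewise from the intertwining relation, so $\lambda\notin\spec(\mathcal{M}_{a,b})$. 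Chaining these implications gives (i)$\Leftrightarrow$(ii)$\Leftrightarrow$(iii).

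\emph{Main obstacle.} The delicate point is the commutator estimate for $[\Lambda^\alpha,\chi(\,\cdot\,/n)]$ in the Weyl-sequence argument: in the classical case $\alpha\in2\NN$ this commutator is a multiplication operator supported near the cutoff region, but for general $\alpha$ it is nonlocal, so one must invoke pseudodifferential calculus (and the decay of the kernel of $\Lambda^\alpha$ to dispose of the far-field contribution) to see that it still contributes only at sub-leading order in $L^2(\RM)$, leaning on the smoothness and uniform boundedness of the Bloch solution $V$. A secondary issue requiring care is the claim that each fibre $\mathcal{M}_{a,b,\xi}$ has compact resolvent, i.e. that the multiplicative term is genuinely relatively compact with respect to the $|k|^{\alpha+1}$-order principal part — a consequence of the mapping properties of $\Lambda^\alpha$ on the torus together with the Rellich embedding $H^{\alpha+1}(\RM/2\pi\ZM)\hookrightarrow L^2(\RM/2\pi\ZM)$.
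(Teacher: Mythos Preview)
Your argument is correct, and for (ii)$\Leftrightarrow$(iii) and for the contrapositive (i)$\Rightarrow$(ii) it runs parallel to the paper (your construction of the two-sided inverse as a direct integral of fibre resolvents is in fact more careful than the paper, which records only the uniform lower bound $\|(\mathcal{M}_{a,b,\xi}-\lambda)v\|\geq C\|v\|$ and declares invertibility on $L^2(\RM)$).

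The genuine divergence is in (ii)$\Rightarrow$(i). You build a Weyl sequence by \emph{spatial} truncation $V_n=\chi(\cdot/n)V$ of the quasi-periodic Bloch solution, which forces you to estimate the nonlocal commutator $[\Lambda^\alpha,\chi(\cdot/n)]$; as you say, this is the main obstacle, and you need pseudodifferential calculus or kernel-decay bounds to handle it. The paper instead builds the Weyl sequence on the \emph{Bloch side}: given the periodic eigenfunction $v$ at $\xi_0$, it sets $\check v_\eps(\xi,z):=\phi_\eps(\xi-\xi_0)v(z)$ with $\phi_\eps=\eps^{-1/2}\mathbf{1}_{|\xi|<\eps/2}$ and takes $v_\eps:=\mathcal{B}^{-1}\check v_\eps\in L^2(\RM)$. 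Parseval gives $\|v_\eps\|_{L^2(\RM)}=\|v\|_{L^2(\RM/2\pi\ZM)}$, and
\[
\|(\mathcal{M}_{a,b}-\lambda)v_\eps\|_{L^2(\RM)}^2=\frac{1}{\eps}\int_{|\xi-\xi_0|<\eps/2}\|(\mathcal{M}_{a,b,\xi}-\lambda)v\|_{L^2(\RM/2\pi\ZM)}^2\,d\xi\longrightarrow 0
\]
by the very norm-continuity of $\xi\mapsto\mathcal{M}_{a,b,\xi}$ that you already established. This sidesteps the commutator estimate entirely: the nonlocality of $\Lambda^\alpha$ never enters, only the continuity of the symbol in $\xi$. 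What your approach buys, by contrast, is that it produces a Weyl sequence of compactly supported functions and makes no use of the Bloch transform beyond producing the bounded solution $V$, so it would adapt more readily to settings where a clean direct-integral decomposition is unavailable.
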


The proof of Proposition \ref{bloch} is contained in Appendix \ref{a:bloch}.  Using this result, it follows that
\[
\sigma_{L^2(\RM)}\left(\mathcal{M}_{a,b}\right)=
     \bigcup_{\xi\in[-1/2,1/2)}\sigma_{L^2_{\rm per}([0,2\pi])}\left(\mathcal{M}_{a,b,\xi}\right)
\]
so that, in particular, the Bloch transform provides a continuous parametrization of the essential
spectrum of the operator $\mathcal{M}_{a,b}$ by the discrete spectrum of a one-parameter family
of Bloch operators.
As a result, rather than analyzing the essential spectrum of the operator $\mathcal{M}_{a,b}$ directly, we can
instead choose to study the 
point spectrum of the operators $\mathcal{M}_{a,b,\xi}$ for each $\xi\in[-1/2,1/2)$.
This investigation is the subject of the next section.

\subsection{Analysis of the Unperturbed Operators}
We begin by considering the stability of the constant state $P_{0,0}=Q_0=1$.  Later, we will treat the linearized operators
about the nearby periodic solutions as small perturbations of those linearized about $Q_0$.  Indeed,
it is straightforward to establish the estimate 
\[
\|\mathcal{M}_{a,b,\xi}-\mathcal{M}_{0,0,\xi}\|=\mathcal{O}(|a|+|b|)
\]
as $(a,b)\to(0,0)$ uniformly in the Bloch parameter $\xi\in[-1/2,1/2)$.  A standard perturbation
argument then guarantees the spectrum of $\mathcal{M}_{a,b,\xi}$ and $\mathcal{M}_{0,0,\xi}$ stay close
for sufficiently small $(a,b)$.  More precisely, we have the following result.

\begin{lemma}
Let $p\geq 1$ and $\alpha>\frac{1}{2}$.  For any $\delta>0$ there exists an $\eps>0$ such that, for any $\xi\in[-1/2,1/2)$ and
any $(a,b)\in\RM^2$ with $\|(a,b)\|\leq\eps$, the spectrum of $\mathcal{M}_{a,b,\xi}$ satisfies
\[
\sigma\left(\mathcal{M}_{a,b,\xi}\right)\subset\left\{\lambda\in\CM:{\rm dist}\left(\lambda,\sigma\left(\mathcal{M}_{0,0,\xi}\right)\right)<\delta\right\}.
\]
\end{lemma}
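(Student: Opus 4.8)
The statement is a standard consequence of upper semicontinuity of the spectrum under analytic perturbations, combined with the uniform-in-$\xi$ norm bound quoted just above the lemma. The plan is to make this perturbation argument precise and to check that the resolvent estimates one needs hold uniformly in the Bloch parameter $\xi \in [-1/2, 1/2)$.

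First I would record the structural facts. For each fixed $\xi$, the operator $\mathcal{M}_{a,b,\xi} = e^{-i\xi x}\partial_z\mathcal{L}_{a,b}e^{i\xi x}$ acts on $L^2(\RM/2\pi\ZM)$ with domain $H^{\alpha+1}(\RM/2\pi\ZM)$, and the embedding $H^{\alpha+1}\hookrightarrow L^2$ on the torus is compact; hence each $\mathcal{M}_{a,b,\xi}$ has compact resolvent and purely discrete spectrum. Next I would fix the reference operator $\mathcal{M}_{0,0,\xi}$, whose spectrum can be computed explicitly from \eqref{e:fourier}: on the Fourier mode $e^{ikz}$ one has $\mathcal{M}_{0,0,\xi}e^{ikz} = i(k+\xi)\big((|k_{0,0}|^\alpha)|k+\xi|^\alpha + 1 - (p+1)\big)e^{ikz}$, so $\sigma(\mathcal{M}_{0,0,\xi})$ is a concrete set of points on the imaginary axis, and in particular the resolvent set is nonempty. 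The key quantitative input is the perturbation bound $\|\mathcal{M}_{a,b,\xi} - \mathcal{M}_{0,0,\xi}\| = \mathcal{O}(|a|+|b|)$, uniform in $\xi$, which follows because the difference is the bounded multiplication operator $(p+1)(Q_b^p - 1) + (p+1)(Q_b^p - P_{a,b}^p)$ precomposed with $\partial_z + i\xi$ — wait, more carefully, the difference of the two operators is $\partial_z\big(\mathcal{L}_{a,b} - \mathcal{L}_{0,0}\big)$ conjugated by $e^{i\xi z}$, and $\mathcal{L}_{a,b} - \mathcal{L}_{0,0} = (|k_{a,b}|^\alpha - |k_{0,0}|^\alpha)\Lambda^\alpha - (p+1)(P_{a,b}^p - 1)$, which is order $|a|+|b|$ in operator norm from $H^{\alpha+1}$ to $L^2$ by Lemma \ref{l:smallampexpand}; composition with $\partial_z$ then lands in $L^2$ with domain $H^{\alpha+1}$, so the right object to estimate is $\|(\mathcal{M}_{a,b,\xi} - \mathcal{M}_{0,0,\xi})(\mathcal{M}_{0,0,\xi} - \mu)^{-1}\|$ for $\mu$ in the resolvent set, which is indeed $\mathcal{O}(|a|+|b|)$ uniformly in $\xi$ since the first factor is $\mathcal{M}_{0,0,\xi}$-bounded with uniformly small constant and $\mathcal{M}_{0,0,\xi}(\mathcal{M}_{0,0,\xi}-\mu)^{-1}$ is uniformly bounded on the relevant contour.

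Given $\delta > 0$, I would then argue as follows. Cover the boundary of the $\delta$-neighborhood of $\sigma(\mathcal{M}_{0,0,\xi})$ — or rather work on the complement — and show that for $\mu$ with $\mathrm{dist}(\mu, \sigma(\mathcal{M}_{0,0,\xi})) \geq \delta$ the resolvent $(\mathcal{M}_{0,0,\xi} - \mu)^{-1}$ is bounded by a constant depending only on $\delta$ (not on $\xi$), using the explicit diagonal form above and the fact that $|i(k+\xi)(\cdots) - \mu|$ is bounded below appropriately. This uniform resolvent bound is the crucial point. Once it is in hand, the Neumann series identity
\[
(\mathcal{M}_{a,b,\xi} - \mu)^{-1} = (\mathcal{M}_{0,0,\xi} - \mu)^{-1}\sum_{n\geq 0}\Big((\mathcal{M}_{0,0,\xi} - \mathcal{M}_{a,b,\xi})(\mathcal{M}_{0,0,\xi} - \mu)^{-1}\Big)^n
\]
converges, provided $\|(\mathcal{M}_{0,0,\xi} - \mathcal{M}_{a,b,\xi})(\mathcal{M}_{0,0,\xi} - \mu)^{-1}\| \leq 1/2$, say, which holds for $\|(a,b)\| \leq \eps$ with $\eps$ depending only on $\delta$ by the uniform perturbation bound. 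Therefore every such $\mu$ lies in the resolvent set of $\mathcal{M}_{a,b,\xi}$, which is exactly the claimed spectral inclusion.

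\textbf{Main obstacle.} The genuine content is the uniformity in $\xi$: both the perturbation bound and the resolvent bound must hold with constants independent of the Bloch parameter. For the perturbation bound this is essentially immediate from Lemma \ref{l:smallampexpand} and the observation that conjugation by $e^{i\xi z}$ with $|\xi| \leq 1/2$ does not degrade operator norms. For the resolvent bound one must check that the lower bound on $|i(k+\xi)((|k_{0,0}|^\alpha)|k+\xi|^\alpha + 1 - (p+1)) - \mu|$ away from $\sigma(\mathcal{M}_{0,0,\xi})$ does not degenerate as $\xi$ ranges over the (compact) interval $[-1/2,1/2]$ and $k$ over $\ZM$ — this is where one uses that $\mathcal{M}_{0,0,\xi}$ is normal (indeed has purely imaginary spectrum) so that its resolvent norm equals the reciprocal of the distance to its spectrum, and that the spectral points depend continuously on $\xi$ while escaping to $i\infty$ along the imaginary axis, so the distance function is bounded below by $\delta$ uniformly once one is outside the $\delta$-neighborhood. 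Assembling these two uniform estimates, the conclusion follows from the standard perturbation argument above.
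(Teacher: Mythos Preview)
Your proposal is correct and is essentially what the paper has in mind: the paper does not actually prove this lemma, but merely states it as a consequence of the uniform-in-$\xi$ estimate $\|\mathcal{M}_{a,b,\xi}-\mathcal{M}_{0,0,\xi}\|=\mathcal{O}(|a|+|b|)$ together with ``a standard perturbation argument.''  Your Neumann-series sketch, combined with the normality of the constant-coefficient operator $\mathcal{M}_{0,0,\xi}$ (so that $\|(\mathcal{M}_{0,0,\xi}-\mu)^{-1}\|_{L^2\to L^2}=\mathrm{dist}(\mu,\sigma(\mathcal{M}_{0,0,\xi}))^{-1}$), is exactly the content of that phrase, and you have correctly identified the only nontrivial point, namely the uniformity in $\xi$ of both the relative bound and the resolvent bound.

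One small technical remark: since the difference $\mathcal{L}_{a,b}-\mathcal{L}_{0,0}$ contains the unbounded term $(|k_{a,b}|^\alpha-|k_{0,0}|^\alpha)\Lambda^\alpha$, the perturbation is only \emph{relatively} bounded rather than bounded on $L^2$, and the quantity you ultimately need small is $\|(\mathcal{M}_{a,b,\xi}-\mathcal{M}_{0,0,\xi})(\mathcal{M}_{0,0,\xi}-\mu)^{-1}\|_{L^2\to L^2}$, not just the $L^2\to L^2$ resolvent norm times a constant. You noted this yourself; to close the argument uniformly for $\mu$ ranging over the (unbounded) complement of the $\delta$-neighborhood, it is cleanest to observe directly on Fourier modes that the symbol of the difference is dominated by $C(|a|+|b|)(1+|\omega_{n,\xi}|)$, so that $\|(\mathcal{M}_{a,b,\xi}-\mathcal{M}_{0,0,\xi})v\|\leq C(|a|+|b|)(\|\mathcal{M}_{0,0,\xi}v\|+\|v\|)$ uniformly in $\xi$, whence the required smallness follows from $\|\mathcal{M}_{0,0,\xi}(\mathcal{M}_{0,0,\xi}-\mu)^{-1}\|\leq 1+|\mu|\,|i\omega_{n,\xi}-\mu|^{-1}$ together with the elementary fact that $|\mu|/|i\omega_{n,\xi}-\mu|$ is bounded uniformly over $n,\xi$ once $\mathrm{dist}(\mu,\sigma(\mathcal{M}_{0,0,\xi}))\geq\delta$ (split into $|\mu|$ comparable to some $|\omega_{n,\xi}|$ versus not). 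This is routine and does not change your overall strategy.
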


We now analyze the spectrum of $\mathcal{M}_{0,0,\xi}$ posed on $L^2(\RM/2\pi\ZM)$.  
Since $\mathcal{M}_{0,0,\xi}$ has constant coefficients, we find by a straightforward Fourier analysis argument
that
\[
\sigma(\mathcal{M}_{0,0,\xi})=\left\{\lambda=i\omega_{n,\xi}:n\in\ZM\right\}\subset i\RM
\]
for each fixed $\xi\in[-1/2,1/2)$, where the $\omega_{n,\xi}$ are determined by the linear dispersion
relation 
\begin{equation}\label{e:lindisp}
\omega(k):=kp\left(|k|^\alpha-1\right)
\end{equation}
through $\omega_{n,\xi}:=\omega(n+\xi)$.  Notice that every $\lambda\in\sigma(\mathcal{M}_{0,0,\xi})$
is a semi-simple eigenvalue with algebraic and geometric multiplicity given by the number
of distinct $n\in\mathbb{Z}$ such that $\lambda=i\omega_{n,\xi}$ with associated
eigenfunction $e_n:=e^{inz}$.

To study the behavior of these eigenvalues for small $(a,b)\in\RM^2$, notice that the spectrum of the
operator $\mathcal{M}_{a,b}$ is symmetric with respect to both the real and imaginary axis.  Indeed,
since the coefficients of $\mathcal{M}_{a,b}$ are real valued it follows that its spectrum
is symmetric with respect to the real axis.  In terms of the Bloch operators this implies
that $\sigma(\mathcal{M}_{a,b,\xi})=\overline{\sigma(\mathcal{M}_{a,b,-\xi})}$.  Furthermore, noting
that $\mathcal{M}_{a,b}$ anti-commutes with the isometry $S:L^2(\RM)\to L^2(\RM)$ given by
\[
Sv(z)=v(-z)
\]
we find that the spectrum of $\mathcal{M}_{a,b}$ is symmetric with respect to the origin.  It follows
that the Bloch operators satisfy $\mathcal{M}_{a,b,\xi}S=-S\mathcal{M}_{a,b,\xi}$ so that
$\sigma(\mathcal{M}_{a,b,\xi})=-\sigma(\mathcal{M}_{a,b,-\xi})$.
Finally, recalling that $k_{a,b}$ is even
in $a$ and that $P_{a,b}(z+\pi)=P_{-a,b}(z)$ for all $|(a,b)|\ll 1$ we see that
$\sigma(\mathcal{M}_{a,b})=\sigma(\mathcal{M}_{-a,b})$ and that $\sigma(\mathcal{M}_{a,b,\xi})=\sigma(\mathcal{M}_{-a,b,\xi})$.
As a result, we find that the spectrum of a given Bloch operator $\mathcal{M}_{a,b,\xi}$ is symmetric with
respect to the imaginary axis.  It follows then that when eigenvalues of $\mathcal{M}_{a,b,\xi}$ 
bifurcate from the imaginary axis they must bifurcate in pairs resulting from collisions of eigenvalues
on the imaginary axis.  A well known result from the study of Hamiltonian systems tells us that when
two purely imaginary eigenvalues collide, the collision will not result in a pair of eigenvalues
bifurcating from the imaginary axis provided both eigenvalues have the same 
Krein signature; see \cite{YS}.

We now consider the location of the eigenvalues more carefully, in particular watching for sets of eigenvalues
that collide for a fixed $\xi$.  
Notice by the symmetry property $\sigma\left(\mathcal{M}_{a,b,\xi}\right)=\overline{\sigma\left(\mathcal{M}_{a,b,-\xi}\right)}$
we may restrict our consideration to Bloch frequencies $\xi\in[0,1/2]$.
Now, when $\xi=0$ we find that
\[
\omega_{-1,0}=\omega_{0,0}=\omega_{1,0}=0
\]
and
\[
\ldots<\omega_{-3,0}<\omega_{-2,0}<0<\omega_{2,0}<\omega_{3,0}<\ldots.
\]
Furthermore, 
for $\xi\in[0,1/2]$ we have
\[
\omega_{n,\xi}\subset\left(-\infty,-\frac{3p}{2}\left(\left(\frac{3}{2}\right)^\alpha-1\right)\right]
    \cup\left[2p(2^\alpha-1),\infty\right),\quad |n|\geq 2
\]
and that
\[
\omega_{n,\xi}\subset\left[\frac{p}{2}\left(\frac{1}{2^\alpha}-1\right),\frac{3p}{2}\left(\left(\frac{3}{2}\right)^\alpha-1\right)\right],\quad|n|\leq 1,
\]
which naturally provides us with a spectral decomposition
\[
\sigma\left(\mathcal{M}_{0,0,\xi}\right)=\sigma_1\left(\mathcal{M}_{0,0,\xi}\right)\cup\sigma_2\left(\mathcal{M}_{0,0,\xi}\right)
\]
for $\mathcal{M}_{0,0,\xi}$ with
\[
\left\{\begin{aligned}
\sigma_1\left(\mathcal{M}_{0,0,\xi}\right)&=\left\{\lambda\in\CM:\lambda=i\omega_{n,\xi}\textrm{ for some }|n|\geq 2\right\}\\
\sigma_2\left(\mathcal{M}_{0,0,\xi}\right)&=\left\{i\omega_{-1,\xi},i\omega_{0,\xi},i\omega_{1,\xi}\right\},
\end{aligned}\right.
\]
with the property that for any $v$ in the infinite dimensional spectral subspace associated with $\sigma_1\left(\mathcal{M}_{0,0,\xi}\right)$ 
satisfies
\[
\left<\mathcal{L}_{0,0,\xi}v,v\right>\geq\left(\left(\frac{3}{2}\right)^\alpha-1\right)p\|v\|^2
\]
uniformly for $\xi\in[0,1/2]$, where 
\[
\mathcal{L}_{0,0,\xi}:=e^{-i\xi x}\mathcal{L}_{0,0}e^{i\xi x}
\]
is considered on $L^2(\RM/2\pi\ZM)$; in particular, it follows that all eigenvalues in $\sigma_1(\mathcal{M}_{0,0,\xi})$
have positive Krein signature for $\xi\in[0,1/2]$.
By a standard perturbation argument, 
we find that the above properties persist for sufficiently small $a$ and $b$.  More precisely, one has
that for $a$ and $b$ sufficiently small we have a spectral decomposition
\[
\sigma\left(\mathcal{M}_{a,b,\xi}\right)=\sigma_1\left(\mathcal{M}_{a,b,\xi}\right)\cup\sigma_2\left(\mathcal{M}_{a,b,\xi}\right)
\]
such that 
\[
\sigma_1\left(\mathcal{M}_{a,b,\xi}\right)\cap\sigma_2\left(\mathcal{M}_{a,b,\xi}\right)=\emptyset,
\]
for $|(a,b)|\ll 1$ and $\xi\in[0,1/2]$, where
the spectral subspace associated with $\sigma_2\left(\mathcal{M}_{a,b,\xi}\right)$ is three dimensional,
and the infinitely many eigenvalues associated with $\sigma_1\left(\mathcal{M}_{a,b,\xi}\right)$
all have positive Krein signature.  Notice this latter property implies that all the eigenvalues
in $\sigma_1\left(\mathcal{M}_{a,b,\xi}\right)$ are purely imaginary for $|(a,b)|$ sufficiently small.

It now remains to determine the location of the eigenvalues in $\sigma_2\left(\mathcal{M}_{a,b,\xi}\right)$.
These are smooth continuations for small $a,b\in\RM$ of the eigenvalues $i\omega_{-1,\xi}$, $i\omega_{0,\xi}$, and
$i\omega_{1,\xi}$ of $\mathcal{M}_{a,b,\xi}$.  First, notice that two such eigenvalues collide if and only
if $\xi=0$, when $\omega_{\pm 1,0}=\omega_{0,0}=0$.  Furthermore, for any $\xi_0\in(0,1/2)$ there exists
a constant $c_0>0$ such that
\[
\left|\omega_{j,\xi}-\omega_{k,\xi}\right|\geq c_0 p,\quad j,k\in\{-1,0,1\},~j\neq k,~\xi\in[\xi_0,1/2].
\]
Consequently, these eigenvalues will remain simple and distinct under small perturbations (for sufficiently
small $a$ and $b$) for all $\xi\in[0,1/2]$.  In particular, it follows that for any $\xi_0\in(0,1/2]$ we have
\[
\sigma_2\left(\mathcal{M}_{a,b,\xi}\right)\subset i\RM,\quad\xi\in[\xi_0,1/2]
\]
for sufficiently small $a$ and $b$.  Thus, we have reduced the problem to locating the eigenvalues
$i\omega_{\pm 1,\xi}$ and $i\omega_{0,\xi}$ for $a$, $b$, and $\xi$ small.

\subsection{Location of the Critical Bloch Spectrum}\label{s:crit}

To this end, for $|(a,b,\xi)|\ll 1$ we compute a suitable basis $\{\eta_j(z;a,b,\xi)\}_{j=0,1,2}$ for the  three dimensional spectral subspace associated
to $\sigma_2(\mathcal{M}_{a,b,\xi})$ and compute the $3\times 3$ matrices
\[
\mathcal{B}_{a,b,\xi}:=\left[\left(\left<\frac{\eta_j(z;a,b,\xi)}{\left<\eta_j(z;a,b,\xi),\eta_j(z;\a,b,\xi)\right>},
      \mathcal{M}_{a,b,\xi}\eta_k(z;a,b,\xi)\right>\right)\right]_{j,k=0,1,2}
\]
and 
\[
I_{a,b,\xi}:=\left[\left(\frac{\left<\eta_j(z;a,b,\xi),\eta_k(z;a,b,\xi)\right>}{\left<\eta_j(z;a,b,\xi),\eta_j(z;\a,b,\xi)\right>}\right)\right]_{j,k=0,1,2},
\]
representing, respectively, the actions of $\mathcal{M}_{a,b,\xi}$ and the identity operator
on this space.  First, at $a=b=0$ we have that the operator $\mathcal{M}_{0,0,\xi}$
has constant coefficients and a basis for the critical spectral subspace in this case
is spanned by the functions $1$ and $e^{\pm iz}$ associated with the eigenvalues
$i\omega_{0,\xi}$ and $i\omega_{\pm 1,\xi}$, respectively.  Working instead with the real
basis 
\[
\eta_0(z;0,0,\xi)=\cos(z),~~\eta_1(z;0,0,\xi)=\sin(z),~~\eta_2(z;0,0,\xi)=1
\]
a direct calculation yields
\[
\mathcal{B}_{0,0,\xi}=\left(\begin{array}{ccc}
                            \frac{i}{2}\left(\omega_{1,\xi}+\omega_{-1,\xi}\right) & \frac{1}{2}\left(\omega_{1,\xi}-\omega_{-1,\xi}\right) & 0\\
                            -\frac{1}{2}\left(\omega_{1,\xi}-\omega_{-1,\xi}\right) & \frac{i}{2}\left(\omega_{1,\xi}+\omega_{-1,\xi}\right) & 0\\
                            0 & 0 & i\omega_{0,\xi}
                            \end{array}\right)
\]
valid for any $\xi\in[0,1/2]$.  Furthermore, by the definition of $k_{0,b}^\alpha$ given in Theorem \ref{t:existence}, we see that
the above functions $\eta_j$ also form a basis for the null space of the operator $\mathcal{M}_{0,b,\xi}$, from which it follows
that we may take
\[
I_{0,b,\xi}=\left(\begin{array}{ccc}1&0&0\\0&1&0\\0&0&1\end{array}\right)
\]
for all $b$ and $\xi$ sufficiently small.  Here and throughout, we are using $\left<f,g\right>=\int_0^{2\pi}\overline{f(z)}g(z)dz$.

Next, at $\xi=0$ we claim that $\lambda=0$ is an eigenvalue of $\mathcal{M}_{a,b,0}$ with algebraic multiplicity three
and geometric multiplicity two.  Indeed, notice that since \eqref{e:kdv} is invariant with respect to spatial translations
it follows that
\[
\mathcal{M}_{a,b,0}\partial_zP_{a,b}(z)=0
\]
so that $\lambda=0$ is indeed an eigenvalue of $\mathcal{M}_{a,b,0}$.  Furthermore, differentiating the profile equation \eqref{e:profileeqn}
with respect to the parameters $a$ and $b$ yield
\begin{align*}
\mathcal{M}_{a,b,0}\partial_aP_{a,b}(z)&=-\partial_a\left(|k_{a,b}|^\alpha\right)\Lambda^\alpha\partial_zP_{a,b}\\
\mathcal{M}_{a,b,0}\partial_bP_{a,b}(z)&=-\partial_b\left(|k_{a,b}|^\alpha\right)\Lambda^\alpha\partial_zP_{a,b}
\end{align*}
so that, in particular, we find
\[
\mathcal{M}_{a,b,0}\left(\partial_b\left(|k_{a,b}|^\alpha\right)\partial_aP_{a,b}(z)-
          \partial_a\left(|k_{a,b}|^\alpha\right)\partial_bP_{a,b}(z)\right)=0,
\]
giving a second function in the kernel of $\mathcal{M}_{a,b,0}$.  Finally, by a straight forward computation,
using \eqref{e:profileeqn} and the fact that $\mathcal{L}_{a,b,0}\partial_zP_{a,b}=0$, we find 
\[
\mathcal{M}_{a,b,0}P_{a,b}(z)=-p\left(|k_{a,b}|^\alpha\Lambda^\alpha +1\right)\partial_zP_{a,b}
\]
so that
\[
\mathcal{M}_{a,b,0}\left(\partial_b\left(|k_{a,b}|^\alpha\right)P_{a,b}-p|k_{a,b}|^\alpha\partial_bP_{a,b}\right)
     =-p\partial_b\left(|k_{a,b}|^\alpha\right)\partial_zP_{a,b},
\]
giving a $2\pi$-periodic function in the generalized kernel of $\mathcal{M}_{a,b,0}$ ascending above the translation mode $\partial_zP_{a,b}$.
In particular, this shows that for all values of $a$ and $b$ sufficiently small, 
the three eigenvalues $i\omega_{0,\xi}$ and $i\omega_{\pm 1,\xi}$ all vanish when $\xi=0$, and that the eigenvalue
$\lambda=0$ indeed corresponds to an algebraically triple and geometrically double eigenvalue of the operator 
$\mathcal{M}_{a,b,0}$ acting on $L^2(\RM/2\pi\ZM)$.  

Using the expansions in Lemma \ref{l:smallampexpand} we now 
obtain a basis for the critical eigenspace which is compatible with the basis found at $a=b=0$.  In particular,
we take
\begin{align*}
\eta_0(z;a,b,0)&=\frac{1}{p+1}\left(\partial_b\left(|k_{a,b}|^\alpha\right)\partial_aP_{a,b}(z)-
          \partial_a\left(|k_{a,b}|^\alpha\right)\partial_bP_{a,b}(z)\right)\\
     &=\cos(z)-\frac{2+2^\alpha(p-1)}{4(2^\alpha-1)}a+\frac{p+1}{6}\cos(2z)a-\frac{2}{p}\cos(z)b+\mathcal{O}(a^2+b^2)\\
\eta_1(z;a,b,0)&=-\frac{1}{a}\partial_zP_{a,b}=\sin(z)+\frac{p+1}{2(2^\alpha-1)}\sin(2z)a+\mathcal{O}(a^2+b^2)\\
\eta_2(z;a,b,0)&=\partial_b\left(|k_{a,b}|^\alpha\right)P_{a,b}-p|k_{a,b}|^\alpha\partial_bP_{a,b}\\
     &=1+(p+1)\cos(z)a-\frac{p+1}{p}b+\mathcal{O}(a^2+b^2).
\end{align*}
In this basis, a straightforward calculation yields
\[                          
I_{a,b,0}=\left(\begin{array}{ccc}
1 & 0 & \frac{2^\alpha(p+3)-2(p+2)}{2(2^\alpha-1)}a\\
0 & 1 & 0\\
\frac{2^\alpha(p+3)-2(p+2)}{4(2^\alpha-1)}a & 0 & 1
\end{array}\right)+\mathcal{O}(a^2+b^2)
\]
and
\[
\mathcal{B}_{a,b,0}=\left(\begin{array}{ccc}
                          0 & 0 & 0\\
                          0 & 0 & \sigma_{a,b}\\
                          0 & 0 & 0
                          \end{array}\right),
\]
where
\begin{align*}
\sigma_{a,b}&=\frac{\left<\eta_1(z;a,b,0),\mathcal{M}_{a,b,0}\eta_2(z;a,b,0)\right>}{\left<\eta_1(z;a,b,0),\eta_1(z;a,b,0)\right>}=pa\partial_b\left(|k_{a,b}|^\alpha\right)\\
&=p(p+1)a-2(p+1)ab+\mathcal{O}(|a|(a^2+b^2)).
\end{align*}

In order to analyze the breaking of the algebraically triple eigenvalue at $\lambda=0$ of the operator
$\mathcal{M}_{a,b,\xi}$ for $|\xi|\ll 1$, we need the following result which allows us to expand
the nonlocal dispersive operator $\partial_x\Lambda^\alpha$ with respect to the Bloch frequency $\xi$.

\begin{lemma}\label{l:blochexpand}
Let $\alpha>0$ and $f\in H^{\alpha+1}(\RM/2\pi\ZM)$ be fixed.  Then for all $|\xi|\ll 1$, we have
\begin{align*}
&e^{-i\xi x}\partial_x\Lambda^\alpha e^{ikx}f(x)=
\partial_x\Lambda^\alpha f(x)+i\xi|\xi|^\alpha\hat{f}(0)\\
&\qquad\qquad+\left(\sum_{l=1}^\infty i{\alpha+1\choose 2l-1}\Lambda^{\alpha-2(l-1)}\xi^{2l-1}\right)\mathcal{P}f(x)\\
&\qquad\qquad\qquad+\left(\sum_{r=1}^\infty {\alpha+1\choose 2r}\partial_x\Lambda^{\alpha-2r}\xi^{2r}\right)\mathcal{P}f(x),
\end{align*}
where ${m\choose n}:=\frac{m(m-1)(m-2)\ldots(m-n+1)}{n!}$ denotes the generalized Binomial coefficient,
defined for $m\in\CM$ and $n\in\mathbb{N}$, and $\mathcal{P}$ denotes the orthogonal projection of $H^{\alpha+1}(\RM/2\pi\ZM)$
onto the subspace of mean-zero functions.
\end{lemma}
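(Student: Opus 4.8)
The plan is to reduce the identity to an elementary computation with Fourier series on the torus. Writing $f(x)=\sum_{n\in\ZM}\hat f(n)e^{inx}$, the starting point is that conjugating by $e^{i\xi x}$ the $\RM$-Fourier multiplier operator $\partial_x\Lambda^\alpha$ (symbol $ik|k|^\alpha$) acts on the $n$-th Fourier mode of a $2\pi$-periodic function by multiplication by $i(n+\xi)|n+\xi|^\alpha$; that is,
\[
e^{-i\xi x}\partial_x\Lambda^\alpha e^{i\xi x}f(x)=\sum_{n\in\ZM}i(n+\xi)|n+\xi|^\alpha\,\hat f(n)\,e^{inx},
\]
the series converging in $L^2(\RM/2\pi\ZM)$ because $f\in H^{\alpha+1}(\RM/2\pi\ZM)$. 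I would first peel off the $n=0$ term, which contributes exactly $i\xi|\xi|^\alpha\hat f(0)$; this is the single mode on which the symbol fails to be smooth in $\xi$, so it must be kept as a separate, non-analytic summand, while on all the remaining modes only $\mathcal{P}f$ is felt.

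Next, for $n\neq0$ and $|\xi|<1$ the sign of $n+\xi$ agrees with that of $n$, so I would write
\[
i(n+\xi)|n+\xi|^\alpha=i\,\sgn(n)\,|n|^{\alpha+1}\Bigl(1+\sgn(n)\tfrac{\xi}{|n|}\Bigr)^{\alpha+1}
\]
and expand the last factor with the generalized binomial series $(1+t)^{\alpha+1}=\sum_{j\geq0}\binom{\alpha+1}{j}t^j$, which is legitimate since $|t|=|\xi|/|n|\leq|\xi|<1$. The $n$-th Fourier coefficient of the operator then becomes $i\sum_{j\geq0}\binom{\alpha+1}{j}(\sgn n)^{j+1}|n|^{\alpha+1-j}\,\xi^j\hat f(n)$, and I would split the sum over $j$ into even indices $j=2r$ and odd indices $j=2l-1$: for $j=2r$ the coefficient $(\sgn n)^{j+1}|n|^{\alpha+1-j}$ equals $n|n|^{\alpha-2r}$, and for $j=2l-1$ it equals $|n|^{\alpha-2(l-1)}$. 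Reinstating the overall factor $i$, these are precisely the Fourier symbols of $\partial_x\Lambda^{\alpha-2r}$ and of $i\Lambda^{\alpha-2(l-1)}$ acting on the mean-zero subspace, which accounts for the two infinite sums and the projection $\mathcal{P}$ in the statement; the $r=0$ term, together with the annihilated $n=0$ mode, reassembles to $\partial_x\Lambda^\alpha f$. This yields the asserted identity, at least formally.

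The one point requiring genuine care is the convergence of the resulting double series and the legitimacy of the rearrangement, which I would settle by a direct $L^2$ estimate. For $J\geq\lceil\alpha\rceil$, the difference between $e^{-i\xi x}\partial_x\Lambda^\alpha e^{i\xi x}f-i\xi|\xi|^\alpha\hat f(0)$ and the partial sum of order at most $J$ equals, on each mode $n\neq0$, the scalar $i\,\sgn(n)|n|^{\alpha+1}$ times the binomial remainder $\sum_{j>J}\binom{\alpha+1}{j}(\sgn(n)\xi/|n|)^j$; since $j>J\geq\lceil\alpha\rceil$ forces $|n|^{\alpha+1-j}\leq1$ for every $|n|\geq1$, this difference is bounded in modulus by $\bigl(\sum_{j>J}|\binom{\alpha+1}{j}|\,|\xi|^j\bigr)|\hat f(n)|$, whose prefactor tends to $0$ as $J\to\infty$ because $\sum_j|\binom{\alpha+1}{j}|\,|\xi|^j<\infty$ for $|\xi|<1$. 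Summing the squares of these bounds over $n$ against $\|f\|_{L^2}^2$ then shows the series converges to the left-hand side in $L^2(\RM/2\pi\ZM)$, and the same estimate handles the separate convergence of the even- and odd-index subseries. I expect this bookkeeping — balancing the $|n|^{\alpha+1}$ prefactor against the decay of the binomial coefficients $\binom{\alpha+1}{j}$ — to be the only real obstacle, the operator identifications themselves being entirely routine.
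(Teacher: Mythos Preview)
Your proposal is correct and follows essentially the same route as the paper: both pass to Fourier series, isolate the $n=0$ mode, expand the symbol $i(n+\xi)|n+\xi|^\alpha$ for $n\neq 0$ via the generalized binomial series, and split into even and odd powers of $\xi$ to recognize the operators $\partial_x\Lambda^{\alpha-2r}$ and $\Lambda^{\alpha-2(l-1)}$. The only cosmetic difference is that the paper expands $|k+\xi|^\alpha$ with $\binom{\alpha}{m}$, multiplies by $(k+\xi)$, and invokes Pascal's rule to reach $\binom{\alpha+1}{m}$, whereas you expand $|n+\xi|^{\alpha+1}$ directly; your added $L^2$ tail estimate is a welcome justification the paper leaves implicit.
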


\begin{proof}
Using the Fourier series representation of $f$ we find
\[
e^{-i\xi x}\partial_x\Lambda^\alpha e^{i\xi x}f(x)=\sum_{k\in\ZM}i(k+\xi)|k+\xi|^\alpha e^{ikx}\hat{f}(k).
\]
Since $|\xi|\ll 1$, we have $|k+\xi|=|k|+\sgn(k)\xi$ so that, for $k\neq 0$, the term $|k+\xi|^\alpha$ may be expanded
using Newton's Binomial series as 
\[
|k+\xi|^\alpha=\sum_{m=0}^\infty {\alpha\choose m} |k|^{\alpha-m}\sgn(k)^m\xi^m,
\]
and hence
\[
(k+\xi)|k+\xi|^\alpha=|k|^\alpha k+\sum_{m=1}^\infty\sgn(k)^{m-1}\left[{\alpha\choose m}+{\alpha\choose m-1}\right]
|k|^{\alpha-(m-1)}\xi^m,
\]
valid for $k\neq 0$.
If $m=2l-1$ for some $l\in\mathbb{N}$, then 
\[
\sgn(k)^{m-1}|k|^{\alpha-(m-1)}=|k|^{\alpha-2(l-1)}
\]
while if $m=2r$ for some $r\in\mathbb{N}$, we have
\[
\sgn(k)^{m-1}|k|^{\alpha-(m-1)}=k|k|^{\alpha-2r}.
\]
Using Pascal's rule ${m\choose n}+{m\choose n-1}={m+1\choose n}$, valid for all $m\in\CM$ and $n\in\mathbb{N}$,
we find for $k\neq 0$
\begin{align*}
(k+\xi)|k+\xi|^\alpha&=k|k|^\alpha+\sum_{l=1}^\infty{\alpha+1\choose 2l-1}|k|^{\alpha-2(l-1)}\xi^{2l-1}\\
&\quad+\sum_{r=1}^\infty{\alpha+1\choose 2r}k|k|^{\alpha-2r}\xi^{2r},
\end{align*}
from which the claimed expansion follows.
\end{proof}

\begin{remark}\label{r:smooth}
Notice that, for a given $\alpha>0$ the smoothness of the operator $e^{-i\xi x}\partial_x\Lambda^\alpha e^{i\xi x}$
in $\xi$ near $\xi=0$ is determined by the smoothness of the term $\xi|\xi|^\alpha$ near the origin.  In particular,
for $\alpha\in 2\mathbb{N}$ one obtains an analytic expansion, while for positive $\alpha\notin 2\mathbb{N}$ one obtains
only a $C^{\lfloor \alpha\rfloor+1}$ expansion.  It is expected then that for every $\alpha>0$ the spectrum
of $\mathcal{M}_{a,b,\xi}$ bifurcating from the $(\lambda,\xi)=(0,0)$ will be at least $C^1$
in a neighborhood of $\xi=0$.
\end{remark}

From Lemma \ref{bloch}, we can expand for  $|(a,b,\xi)|\ll 1$ the operator $\mathcal{M}_{a,b,\xi}$ to
any desired order in $\xi$.  For our purposes, it is sufficient to identify the term of first
order in $\xi$, which is
\begin{equation}\label{order1}
\frac{\partial}{\partial\xi}\mathcal{M}_{a,b,\xi}\Big{|}_{\xi=0}=
i\left((\alpha+1)k_{a,b}^\alpha\Lambda^\alpha+1-(p+1)P_{a,b}^p\right).
\end{equation}
Notice that outside the small amplitude regime $|(a,b)|\ll 1$ one would have to consider higher order
terms in the expansion as well.  Indeed, in the classical case $\alpha=2$, corresponding to the KdV equation,
one typically must expand the Bloch operator to \emph{second} order in $\xi$; see \cite{BrJZ}.
For large amplitude periodic waves then, Remark \ref{r:smooth} then seems to indicate one must take 
care to consider the cases $\alpha\geq 1$ and $\alpha\in(0,1)$ separately.  In our case, however, we will see below
that the restriction to small amplitude solutions requires only an expansion to first order, hence bypassing
this added difficulty.

Continuing, we note that the basis $\{\eta_j(z;a,b,0)\}_{j=0,1,2}$ defined above can be extended to a basis 
of the three-dimensional eigenspace bifurcating from the generalized kernel of $\mathcal{M}_{a,b,0}$ 
for sufficiently small $a,b$ and $\xi$.  This provides us with an expansion of the form 
\begin{equation}\label{matrixexpand}
\mathcal{B}_{a,b,\xi}=\mathcal{B}_{0,0,\xi}+\mathcal{B}_{a,b,0}+a\xi\mathcal{B}_1+b\xi\mathcal{B}_2
    +\mathcal{O}\left(|\xi|(a^2+b^2)+|\xi|^{\min(2,\alpha+1)}(a+b)\right).
\end{equation}
In order to determine the three eigenvalues of $\mathcal{B}_{a,b,\xi}$, we consider the characteristic
polynomial
\[
D(\lambda;a,b,\xi)=-\det\left(\mathcal{B}_{a,b,\xi}-\lambda I_{a,b,\xi}\right)=\lambda^3+c_2\lambda^2+c_1\lambda+c_0
\]
where the coefficient functions $c_j=c_j(a,b,\xi)$, defined for $|(a,b,\xi)|\ll 1$, 
depend smoothly on the parameters $a,b$ and are $C^1$ in $\xi$.  Analyzing the
dependence of the $c_j$ on $a$, $b$, and $\xi$ more carefully, notice that
since the spectrum of $\mathcal{M}_{a,b,\xi}$ is symmetric with respect to the imaginary
axis, considering the coefficients of $D$ as symmetric functions of its roots, we see that 
the $c_2$ and $c_0$ must be purely imaginary whereas $c_1$ must be real.  
Furthermore, since 
\[
\sigma\left(\mathcal{M}_{a,b,\xi}\right)=\sigma\left(\mathcal{M}_{-a,b,\xi}\right),\quad
\sigma\left(\mathcal{M}_{a,b,\xi}\right)=\overline{\sigma\left(\mathcal{M}_{a,b,-\xi}\right)},
\]
it follows that the coefficients $c_j$ must all be even in $a$ and that $c_2$ and $c_0$ are odd
in $\xi$ while $c_1$ is even in $\xi$.  Together with the expansion of $\mathcal{B}_{a,b,\xi}$ above,
these properties imply that the polynomial $D$ can be expressed as
\[
D(\lambda;a,b,\xi)=-(ip\xi)^3\det\left(\frac{1}{ip\xi}\mathcal{B}_{a,b,\xi}-\frac{\lambda}{ip\xi}I_{a,b,\xi}\right)
\]
so that the critical eigenvalues of $\mathcal{M}_{a,b,\xi}$ bifurcating from the $(\lambda,\xi)=(0,0)$
state are of the form $\lambda=ip\xi X$ where $X$ is a root of the cubic polynomial
\[
Q(X;a,b,\xi)=-\det\left(\frac{1}{ip\xi}\mathcal{B}_{a,b,\xi}-XI_{a,b,\xi}\right)=X^3+d_2X^2+d_1X+d_0,
\]
where the coefficients $d_j$ are real-valued and even in $a$ and $\xi$.  To determine whether the roots
of $Q$, and hence the three critical eigenvalues, lie on the imaginary axis or not we consider the discriminant
\[
\Delta_{a,b,\xi}=18d_2d_1d_0+d_2^2d_1^2-4d_2^3d_0-4d_1^3-27d_0^2,
\]
which here, by the symmetry properties of the coefficients $d_j$, can be expanded near for $|(a,b,\xi)|\ll 1$ as
\begin{equation}\label{discexpand}
\Delta_{a,b,\xi}=\Delta_{0,b,\xi}+\gamma a^2+\mathcal{O}\left((a^2\left(a^2+|b|+\xi^\delta\right)\right)
\end{equation}
for some appropriate $\delta>0$, chosen independently of $\xi$, and some constant $\gamma\in\RM$ still to be determined.
In particular, the polynomial $Q$ will have three real roots, corresponding to stability, when $\Delta_{a,b,\xi}>$ while
it will have one real and two complex-conjugate roots, corresponding to instability, when $\Delta_{a,b,\xi}<0$.

Notice that the operator $\mathcal{M}_{0,b,\xi}$ has constant coefficients and is clearly
skew-adjoint.  Consequently, the spectrum of $\mathcal{M}_{0,b,\xi}$ must lie on $i\RM$ so that,
in particular, $\Delta_{0,b,\xi}\geq 0$.  More precisely, using \eqref{e:lindisp} these eigenvalues 
are given explicitly by
\[
i\xi|k_{0,b}|^\alpha\left(|\xi|^\alpha-1\right),\quad i(\pm 1+\xi)|k_{0,b}|^\alpha\left(|\pm 1+\xi|^\alpha-1\right).
\]
It follows that
\[
\Delta_{0,b,\xi}=\frac{|k_{0,b}|^{6\alpha}}{p^6\xi^6}\left(A_1(\xi;\alpha)A_1(-\xi;\alpha)A_2(\xi;\alpha)\right)^2,
\]
where, for $|\xi|\ll 1$, we have the expansions
\begin{align*}
A_1(\xi;\alpha)&=-1+(1+\xi)^{\alpha+1}-\xi|\xi|^\alpha=(\alpha+1)\xi+\mathcal{O}(|\xi|^{1+\eps})\\
A_2(\xi;\alpha)&=-2+(1-\xi)^{\alpha+1}+(1+\xi)^{\alpha+1}=\alpha(\alpha+1)\xi^2+\mathcal{O}(|\xi|^{2+\eps})
\end{align*}
for some appropriate $\eps>0$ chosen independently of $\xi$.  Thus, for $|\xi|\ll 1$ we have
\[
\Delta_{0,b,\xi}=\frac{|k_{0,b}|^{6\alpha}\alpha^2(\alpha+1)^6\xi^2}{p^6}+\mathcal{O}(|\xi|^{2+\eps}),
\]
which is clearly positive for $\xi$ sufficiently small.

To compute the coefficient $\gamma$ in \eqref{discexpand}, notice it is enough to compute $\Delta_{a,0,0}$ to $\mathcal{O}(a^2)$.  
To this end, using \eqref{order1} we find that the matrix $\mathcal{B}_1$ in \eqref{matrixexpand} is given by\footnote{Notice that expliit
forms of the variations $\partial_\xi \eta_j|_{\xi=0}$ are not needed at this order in the calculation.}
\[
\mathcal{B}_1=i\left(\begin{array}{ccc}
                    0 & 0 & (\alpha-1)p(p+1)+\frac{p(2+2^\alpha(p-1)}{2(2^\alpha-1)}\\
                    0 & 0 & 0\\
                    \frac{(\alpha-1)p(p+1)}{2}+\frac{p(2+2^\alpha(p-1)}{4(2^\alpha-1)} & 0 & 0
                    \end{array}\right),
\]
which gives, using Mathematica \cite{Ma}, the expansion
\[
\Delta_{a,0,0}=\left(\frac{(p+1)\alpha(1+\alpha)^4\left[2^\alpha\left(4-(p-1)(\alpha-1)\right)-4-2(\alpha+p)\right]}{2(2^\alpha-1)}\right)a^2+\mathcal{O}(a^4).
\]
Defining
\begin{equation}\label{e:critpower}
p^*(\alpha):=\frac{2^\alpha(3+\alpha)-4-2\alpha}{2+2^\alpha(\alpha-1)},
\end{equation}
it follows that for $p>\max\left(1,p^*(\alpha)\right)$ we have $\Delta_{a,0,0}<0$ for $|a|\ll 1$, corresponding to instability.
In particular, noting that $p^*(\alpha)<1$ for $\alpha\in(1/2,1)$, we find instability for all $p\geq 1$ when $\alpha\in(1/2,1)$.
On the other hand, for $\alpha>1$ and $1\leq p<p^*(\alpha)$, we find that $\Delta_{a,0,0}>0$ for $|a|\ll1$ corresponding
to stability for sufficiently small $a$
Together, the above analysis establishes our main result.


\begin{theorem}\label{t:kdvmain}
Let $\alpha>\frac{1}{2}$ and $p\geq 1$ be fixed such that either $p\in\mathbb{N}$ or $p=\frac{m}{n}$ where $m$ and $n$ 
are even and odd integers, respectively.  Then the small amplitude periodic traveling wave
solutions $u_{a,b}=P_{a,b}(k_{a,b}\cdot)$ of \eqref{e:kdv} 
constructed in Theorem \ref{t:existence} for $|(a,b)|\ll 1$ are spectrally stable if $\alpha>1$ and $p<p^*(\alpha)$,
and are spectrally unstable if $\alpha\in(1/2,1)$ or if $\alpha>1$ and $p>p^*(\alpha)$, where here $p^*(\alpha)$
is defined in \eqref{e:critpower}.
\end{theorem}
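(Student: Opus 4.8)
The plan is to combine the spectral reduction of Proposition \ref{bloch} with the perturbation analysis of Section \ref{s:stability} to reduce spectral (in)stability to the sign of the discriminant $\Delta_{a,b,\xi}$ of the reduced cubic. By Proposition \ref{bloch} together with the Hamiltonian symmetry of the spectrum, $u_{a,b}$ is spectrally stable if and only if $\sigma(\mathcal{M}_{a,b,\xi})\subset i\mathbb{R}$ for all $\xi\in[-1/2,1/2)$, and by the reality symmetry $\sigma(\mathcal{M}_{a,b,\xi})=\overline{\sigma(\mathcal{M}_{a,b,-\xi})}$ it suffices to treat $\xi\in[0,1/2]$. Using the decomposition $\sigma(\mathcal{M}_{a,b,\xi})=\sigma_1\cup\sigma_2$ established above, for $|(a,b)|\ll 1$ the infinitely many eigenvalues in $\sigma_1(\mathcal{M}_{a,b,\xi})$ carry positive Krein signature uniformly in $\xi$, hence remain purely imaginary (a pair cannot leave $i\mathbb{R}$ without a collision of opposite signatures, cf.\ \cite{YS}), and they are separated from $\sigma_2$ by a uniform spectral gap. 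Thus the whole question reduces to locating the triple of eigenvalues comprising $\sigma_2(\mathcal{M}_{a,b,\xi})$.

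I would first dispose of the Floquet frequencies bounded away from the origin: for each $\xi_0\in(0,1/2]$ the eigenvalues $i\omega_{-1,\xi},i\omega_{0,\xi},i\omega_{1,\xi}$ of $\mathcal{M}_{0,0,\xi}$ are simple and mutually separated by $\geq c_0p$ on $[\xi_0,1/2]$, so analytic perturbation theory keeps $\sigma_2(\mathcal{M}_{a,b,\xi})$ simple and — using again the symmetry of the spectrum about $i\mathbb{R}$ — purely imaginary there, for $(a,b)$ small. It then remains to study $\xi$ in a one-sided neighborhood $[0,\xi_0]$, where the three critical eigenvalues coalesce at $\lambda=0$ when $\xi=0$. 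Near $(\lambda,\xi)=(0,0)$ I would use the representation $\lambda=ip\xi X$, with $X$ a root of the real cubic $Q(X;a,b,\xi)=X^3+d_2X^2+d_1X+d_0$ constructed above (coefficients real and even in $a$ and $\xi$). Since $Q$ has real coefficients, for $\xi\neq 0$ all three eigenvalues lie on $i\mathbb{R}$ exactly when $Q$ has three real roots, i.e.\ when $\Delta_{a,b,\xi}\geq 0$, while $\Delta_{a,b,\xi}<0$ produces a complex-conjugate pair of roots of $Q$ and hence a genuine pair of eigenvalues with nonzero real part $\mp p\xi\,\mathrm{Im}\,X$.

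From \eqref{discexpand}, $\Delta_{a,b,\xi}=\Delta_{0,b,\xi}+\gamma a^2+\mathcal{O}\big(a^2(a^2+|b|+\xi^\delta)\big)$, where the constant-coefficient (skew-adjoint) case gives $\Delta_{0,b,\xi}=C_1\xi^2+\mathcal{O}(|\xi|^{2+\eps})$ with $C_1=|k_{0,b}|^{6\alpha}\alpha^2(\alpha+1)^6/p^6>0$, and $\gamma$ is the $a^2$-coefficient of $\Delta_{a,0,0}$ obtained above from the matrices $\mathcal{B}_{a,b,0}$, $\mathcal{B}_1$ and $I_{a,b,0}$. Rewriting the bracketed factor of that coefficient as $2^\alpha\big(4-(p-1)(\alpha-1)\big)-4-2(\alpha+p)=\big(2+2^\alpha(\alpha-1)\big)\big(p^*(\alpha)-p\big)$ with $p^*$ as in \eqref{e:critpower} yields
\[
\gamma=\frac{(p+1)\,\alpha\,(1+\alpha)^4\big(2+2^\alpha(\alpha-1)\big)}{2(2^\alpha-1)}\,\big(p^*(\alpha)-p\big).
\]
For $\alpha>1$ both $2^\alpha-1$ and $2+2^\alpha(\alpha-1)$ are positive: if $p<p^*(\alpha)$ then $\gamma>0$, so $\Delta_{a,b,\xi}=C_1\xi^2+\gamma a^2+(\text{higher order})>0$ for all small $(a,b,\xi)$ with $(a,\xi)\neq(0,0)$ (and the case $a=\xi=0$ is harmless), giving stability; if $p>p^*(\alpha)$ then $\gamma<0$, and fixing $a\neq 0$ small and choosing $0<\xi^2<\tfrac12|\gamma|a^2/C_1$ forces $\Delta_{a,b,\xi}<0$ on a nonempty interval of Floquet frequencies, giving instability. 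For $\alpha\in(1/2,1)$ one checks $2+2^\alpha(\alpha-1)>0$ and $p^*(\alpha)<1\leq p$, so again $\gamma<0$ and the same choice of $\xi$ yields instability for every admissible $p$. Gluing $[0,\xi_0]$ to $[\xi_0,1/2]$, with $\xi_0$ chosen small enough that both the expansion \eqref{discexpand} and the separation argument of the previous paragraph are simultaneously valid, completes the proof.

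I expect the main obstacle to be the uniformity of all the smallness thresholds in the Floquet parameter: one must verify that the perturbation bounds controlling $\sigma_1(\mathcal{M}_{a,b,\xi})$, the eigenvalue-separation argument on $[\xi_0,1/2]$, and the domain of validity and the remainder in \eqref{discexpand} can all be arranged to hold simultaneously for all $\xi$ under a single threshold on $|(a,b)|$, so that the sign of $\gamma$ genuinely decides (in)stability on the whole interval $[0,1/2]$ rather than merely asymptotically near $\xi=0$. A related subtlety, noted in Remark \ref{r:smooth}, is that for $\alpha\notin 2\mathbb{N}$ the $\xi$-dependence of the Bloch operators — hence of the $d_j$ and of $\Delta_{a,b,\xi}$ — is only $C^1$, not analytic; this is harmless here because the unstable window in $\xi$ has length $\mathcal{O}(|a|)$, on which $\Delta_{0,b,\xi}=\mathcal{O}(a^2)$ while the remainder is $\mathcal{O}(a^{2+\delta})$, but it must be tracked carefully in the error estimates. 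The remaining work — extracting $\gamma$ from $\mathcal{B}_1$ and $I_{a,b,0}$ and checking the algebraic identity for the bracketed factor — is routine and was carried out symbolically above.
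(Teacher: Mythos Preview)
Your proposal is correct and follows essentially the same route as the paper: Bloch decomposition via Proposition \ref{bloch}, Krein-signature control of $\sigma_1$, simplicity of $\sigma_2$ away from $\xi=0$, and near $\xi=0$ the reduction to the discriminant $\Delta_{a,b,\xi}$ of the rescaled cubic, with the sign of the $a^2$-coefficient deciding (in)stability. Your explicit factorization $2^\alpha\big(4-(p-1)(\alpha-1)\big)-4-2(\alpha+p)=\big(2+2^\alpha(\alpha-1)\big)\big(p^*(\alpha)-p\big)$ and the verification that $2+2^\alpha(\alpha-1)>0$ on $(1/2,\infty)$ make transparent a step the paper leaves to inspection, but otherwise the argument is the same.
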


\begin{remark}
As mentioned in the introduction, the spectral instability detected in Theorem \ref{t:kdvmain} is of sideband type,
and is hence not detectable if one restricts to perturbations with the same period
as the underlying wave.
\end{remark}

\begin{remark}\label{r:bo}
For the classical Benjamin-Ono equation, corresponding to $\alpha=p=1$, we find from $p^*(1)=1$ that $\Delta_{a,0,0}=\mathcal{O}(a^4)$ for $|a|\ll 1$.
In this case, the above analysis is insufficient to determine the stability/instability of the small periodic traveling wave solutions, and one
must hence carry the above calculations to higher orders.  We do not attempt this here, but consider this an interesting open question.
\end{remark}

As far as the author is aware, Theorem \ref{t:kdvmain} is the first
rigorous spectral stability/instability result in the periodic context for nonlocal KdV like equations of form \eqref{e:kdv}.
A plot of $p^*(\alpha)$ for $\alpha\geq 1$ is provided in Figure \ref{f:critp}.
Notice that when $\alpha=2$, corresponding to the classic gKdV equation \eqref{e:gkdv}, Theorem \ref{t:kdvmain} recovers
the result of Haragus and Kapitula in \cite{HK}, yielding spectral stability for $1\leq p<2$ and spectral instability
for $p>2$.  As one begins to decrease $\alpha$ from this classical case, the critical power $p^*(\alpha)$ 
decreases nonlinearly, in contrast to the solitary wave case where the critical power decreases linearly as $\alpha$ is decreased; see \cite{BSS}. 
This illustrates an interesting, but not necessarily  unexpected, difference between the solitary and periodic theories.
It is also interesting to note that $p^*(\alpha)<2\alpha$ for all $\alpha>\frac{1}{2}$, indicative of the fact that periodic traveling waves
are generally \emph{less} stable than their solitary wave counterparts.  This seems intuitively clear since the class of periodic traveling wave solutions
of \eqref{e:kdv} generally has a richer structure than that for solitary waves.  Also, the admissible classes of perturbations for periodic waves
is considerably larger: indeed, in the periodic setting one can consider perturbations with twice the fundamental period of the undelrying wave, a situation
that is not possible in the solitary wave theory, which more closely resembles a co-periodic (i.e. zero-Bloch frequency) stability analysis.

What seems striking about Theorem \ref{t:kdvmain} is the fact that the function $p^*(\alpha)$ attains a global maximum of approximately $2.19$ on $(1/2,\infty)$ at
a critical $\alpha^*\approx 2.7486$, and that for $\alpha>\alpha^*$ the function $p^*(\alpha)$ monotonically decreases
with limiting behavior
\[
\lim_{\alpha\to\infty}p^*(\alpha)=1.
\]
This suggests that for $p\in(1,p^*(\alpha^*))$, there exists a finite range of $\alpha$ such that the associated model
equation \eqref{e:kdv} admits spectrally stable small periodic  travleing waves.  Indeed, for $p=1$ this indicates that 
for all $\alpha>1$ the small amplitude periodic traveling wave solutions of \eqref{e:kdv}, as constructed in Theorem \ref{t:existence}, are 
spectrally stable to localized perturbations on the line.  For $p=2$, on the other hand, we see that \eqref{e:kdv} admits spectrally
stable small periodic traveling wave solutions provided $\alpha\in(2,4)$, and that for $\alpha\notin[2,4]$ no such spectrally
stable waves exist.  Furthermore, for $p>p^*(\alpha^*)$, the model equation \eqref{e:kdv} does not admit spectrally stable
small amplitude periodic traveling waves for any $\alpha>1/2$.  It is also important to note that $p^*(1)=1$, so that
our analysis is unable to determine the spectral stability of the small amplitude periodic traveling waves 
constructed in Theorem \ref{t:existence} in the case $\alpha=p=1$, i.e. our analysis is inconclusive
regarding the stability of such waves in the classical Benjamin-Ono equation.  It does, however, indicate that
all such small periodic waves are spectrally unstable in the generalized Benjamin-Ono equation, corresponding
to \eqref{e:kdv} with $\alpha=1$ and $p>1$.

Finally, recall that the solitary wave solutions of \eqref{e:kdv} are known to be spectrally (and nonlinearly) 
stable provided $p<2\alpha$ \cite{BSS}.  Hence, for any $p\geq 1$ it is possible to find a stable solitary wave solution
of \eqref{e:kdv} so long as $\alpha$ is sufficiently large.  In contrast, the analysis presented in this paper provides not only an
upper bound $p^*(\alpha^*)$ on $p$ for which models of the form \eqref{e:kdv} can admit spectrally stable small periodic traveling waves, but also
provides for each $p\in (1,p^*(\alpha^*))$ lower \emph{and} upper bounds on the dispersion parameter $\alpha$ for such stable
waves to exist.  It seems possible that this striking difference between the solitary and periodic wave cases is fundamentally due
to the nature of dispersion on the line versus ``dispersion" on the circle (corresponding to periodic boundary conditions).  
Indeed, notice that the sideband type instability detected by our analysis
can be seen, via the Bloch transform, as a \emph{periodic} instability in the space $L^2(\RM/2\pi n\ZM)$ for some $n\gg 1$.  
At this time, however, we do not attempt to provide a more detailed or rigorous account for the striking differences
between the periodic and solitary wave cases, and instead leave this as an interesting open problem.

\vspace{2mm}

{\bf Acknowledgements:} The author is grateful to Vera Mikyoung Hur, Atanas Stefonov, Mariana Haragus, and L. Miguel Rodrigues for valuable discussions at
various stages of this project.  He also gratefully acknowledges
support from the National Science Foundation under NSF grant DMS-1211183, and from the University of Kansas General Research
Fund allocation 2302278.  Finally, the author thanks the referees for their careful reading of the manuscript and for their many useful
suggestions and references.

\appendix

\section{Existence of Small Periodic Traveling Waves}\label{a:exist}

In this appendix, we utilize a Lyapunov-Schmidt reduction to establish the existence and basic regularity
properties of small amplitude periodic traveling wave solutions of the nonlocal profile equation \eqref{e:travode}.
The method of proof parallels that given for Theorem 1 in \cite{HLS} in the context of the fifth order Kawahara equation.

\begin{theorem}\label{t:existence}
Let $\alpha>\frac{1}{2}$, and $p\ge 1$ be fixed such that either $p\in\mathbb{N}$
or else $p=\frac{m}{n}$ where $m$ and $n$ are even and odd integers, respectively.  
Then there exists constants $a_0,b_0\in\RM$
such that for any fixed $b\in(-b_0,b_0)$ the nonlocal profile equation \eqref{e:travode}
admits a one-parameter family of even, periodic solutions $\{u_{a,b}\}_{a\in(-a_0,a_0)}$ of the form
\[
u_{a,b}(x)=P_{a,b}(k_{a,b}x)
\]
where $P_{a,b}$ is $2\pi$-periodic and smooth in its argument.  Moreover, the following properties hold:
\begin{itemize}
\item[(i)] The map $k:(-a_0,a_0)\times(-b_0,b_0)\to\RM$ is analytic, even in $a$, and satisfies
\[
k_{a,b}^\alpha=k^*(b)+\tilde{k}(a,b)
\]
where $k^*(b)=(p+1)Q_b^p-1$ and
\[
\tilde{k}(a,b)=\sum_{n\geq 1}\tilde{k}_{2n}(b)a^{2n},\quad\left|\tilde k_{2n}(b)\right|\leq\frac{K_0}{\rho_0^{2n}},
\]
for any $|(a,b)|\ll 1$ and some positive constants $K_0$ and $\rho_0>a_0$.
\item[(ii)] The map $(-a_0,a_0)\times(-b_0,b_0)\ni(a,b)\mapsto P_{a,b}\in H^\alpha(\RM/2\pi\ZM)$ is analytic
and can be expanded as
\[
P_{a,b}(z)=Q_b+a\cos(z)+\sum_{\substack{n,m\neq 0,~n+m\geq 2\\ n-m\neq\pm 1}}\tilde{p}_{n,m}(b)e^{i(n-m)z}a^{n+m}
\]
where $\tilde{p}_{n,m}\in\RM$ are such that $\tilde{p}_{n,m}(b)=\tilde{p}_{m,n}(b)$ with
\[
\left|\tilde p_{n,m}(b)\right|\leq\frac{C_0}{\rho_0^{n+m}}
\]
for any $|b|\ll 1$ and some $C_0>0$.
\item[(iii)] The Fourier coefficients $\hat p_n(a,b)$ of the $2\pi$-periodic function $P_{a,b}$,
\[
P_{a,b}(z)=\sum_{n\in\ZM}\hat p_n(a,b)e^{inz}
\]
are real and satisfy $\hat p_0(a,0)=1+\mathcal{O}(a^2)$ and $\hat{p}_n(a,0)=\mathcal{O}(|a|^n)$ for $n\neq 0$
as $a\to 0$.  Moreover, the map $a\mapsto\hat{p}_n(a,b)$ is even (resp. odd) for even (resp. odd)
values of $n$.
\end{itemize}
\end{theorem}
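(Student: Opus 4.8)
The plan is to establish Theorem~\ref{t:existence} by a Lyapunov--Schmidt reduction applied to the rescaled profile equation \eqref{e:profileeqn}, paralleling the treatment of the Kawahara equation in \cite{HLS}. First, fix $b$ with $|b|$ small. Since $\Lambda^\alpha$ annihilates constants, a constant $Q_b$ solves \eqref{e:travode} if and only if $-Q_b+Q_b^{p+1}=b$; as $\partial_Q(-Q+Q^{p+1})|_{Q=1}=p\neq0$, the implicit function theorem produces the unique analytic branch $Q_b$ with $Q_0=1$ and the stated quadratic expansion. I would then write a general even periodic solution as $v=Q_b+\phi$, put $\kappa=k^\alpha$, and recast \eqref{e:travode} as
\[
F(\phi,\kappa,b):=-\kappa\Lambda^\alpha\phi-Q_b-\phi+(Q_b+\phi)^{p+1}-b=0,
\]
viewed as an analytic map from a neighbourhood of $(0,k^*(0),0)$ in $H^\alpha_{\rm even}(\RM/2\pi\ZM)\times\RM\times\RM$ into $L^2_{\rm even}(\RM/2\pi\ZM)$, where $k^*(b)=(p+1)Q_b^p-1$. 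The hypothesis $\alpha>\tfrac12$ is used here: it gives the embedding $H^\alpha(\RM/2\pi\ZM)\hookrightarrow C(\RM/2\pi\ZM)$, so the Nemytskii operator $\phi\mapsto(Q_b+\phi)^{p+1}$ is well defined and, since $Q_b$ stays in a compact subset of $(0,\infty)$, real-analytic near $\phi=0$ even for the non-integer exponents $p=m/n$ allowed in the statement (for which $u\mapsto u^{p+1}$ is analytic on $(0,\infty)$); it also makes $\Lambda^\alpha:H^\alpha\to L^2$ bounded, so $F$ is analytic.

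Next I would carry out the reduction using the kernel of the linearization. One computes $D_\phi F(0,k^*(b),b)=k^*(b)(1-\Lambda^\alpha)$, which acts on $e^{inz}$ by $k^*(b)(1-|n|^\alpha)$; on the even subspace its kernel is $\Span\{\cos z\}$, and the remaining eigenvalues (modes $n=0$ and $|n|\geq2$) are bounded away from $0$ uniformly for $|b|\ll1$, so $D_\phi F(0,\kappa,b)$ is Fredholm of index $0$ with a uniform spectral gap for $\kappa$ near $k^*(0)$. Split $L^2_{\rm even}=\Span\{\cos z\}\oplus\{\cos z\}^\perp$ with orthogonal projection $\Pi$ onto the second summand, and write $\phi=a\cos z+w$ with $w\in\{\cos z\}^\perp\cap H^\alpha_{\rm even}$. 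The analytic implicit function theorem then solves the range equation $\Pi F(a\cos z+w,\kappa,b)=0$ for an analytic $w=w(a,\kappa,b)$ with $w(0,\kappa,b)=0$ and $\partial_a w(0,\kappa,b)=0$ (the latter because $D_\phi F(0,\kappa,b)\cos z\in\Span\{\cos z\}$). Substituting this into the bifurcation equation $g(a,\kappa,b):=\langle\cos z,F(a\cos z+w,\kappa,b)\rangle=0$ and using that $g(0,\kappa,b)=0$ because $\phi\equiv0$ solves $F=0$ for \emph{every} $\kappa$, analyticity lets me factor $g=a\,h(a,\kappa,b)$ with $h$ analytic and $h(0,\kappa,b)=\langle\cos z,D_\phi F(0,\kappa,b)\cos z\rangle=\pi(k^*(b)-\kappa)$. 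Since $\partial_\kappa h(0,k^*(b),b)=-\pi\neq0$, the implicit function theorem gives an analytic $\kappa=\kappa(a,b)$ with $\kappa(0,b)=k^*(b)$ solving $h=0$; then $k_{a,b}:=\kappa(a,b)^{1/\alpha}$ and $P_{a,b}:=Q_b+a\cos z+w(a,\kappa(a,b),b)$ form the asserted family, with $a$ the $\cos z$-Fourier coefficient of $P_{a,b}$ by construction.

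The remaining assertions would follow from symmetry, Cauchy estimates and a bootstrap. Translation invariance of \eqref{e:travode} on the $2\pi$-torus shows that $P_{a,b}(\cdot+\pi)$ is again a solution with the same wavenumber; tracking how many factors of $a\cos z$ are needed to reach a given Fourier mode shows the coefficient of $e^{inz}$ in $P_{a,b}$ is even (resp.\ odd) in $a$ for $n$ even (resp.\ odd), whence $P_{a,b}(z+\pi)=P_{-a,b}(z)$, and uniqueness then forces $k_{a,b}=k_{-a,b}$, i.e.\ $\kappa(a,b)$ is even in $a$; the same bookkeeping yields $\hat p_0(a,0)=1+\mathcal{O}(a^2)$ and $\hat p_n(a,0)=\mathcal{O}(|a|^n)$. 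The geometric bounds $|\tilde k_{2n}(b)|\leq K_0\rho_0^{-2n}$ and $|\tilde p_{n,m}(b)|\leq C_0\rho_0^{-n-m}$ are Cauchy estimates for $a\mapsto\kappa(a,b)$ and $a\mapsto P_{a,b}$, which by the analytic implicit function theorem extend holomorphically to a disc $\{|a|<\rho_0\}$ of radius uniform in $|b|\leq b_0$; the two-index form of the expansion in (ii) simply records that every monomial arising from $(Q_b+\phi)^{p+1}$ is a product of factors $e^{\pm iz}$ coming from $\cos z=\tfrac12(e^{iz}+e^{-iz})$, so grouping by the numbers of $e^{iz}$ and $e^{-iz}$ factors produces the stated form, with $\tilde p_{n,m}=\tilde p_{m,n}$ expressing reality and evenness of $P_{a,b}$ and the mode-$(\pm1)$ terms absent because of the projection off $\cos z$. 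Finally, $C^\infty$-smoothness of $P_{a,b}$ in $z$ follows by a standard elliptic bootstrap: on the mean-zero part the profile equation expresses $\phi$ as $\Lambda^{-\alpha}$ applied to a quantity of the same regularity, gaining $\alpha$ derivatives at each step, with composition by the analytic map $u\mapsto u^{p+1}$ near $u\approx1$ preserving regularity.

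I expect the principal obstacle to be the bifurcation degeneracy rather than any individual estimate: because $\partial_\kappa F(0,\kappa,b)=0$ at $\phi=0$, one cannot solve for $\kappa$ directly from the bifurcation equation, and the remedy is exactly to exploit that $\phi\equiv0$ solves $F=0$ for all $\kappa$, factor the reduced equation as $a\,h=0$, and solve $h=0$ for $\kappa$. A secondary technical point is keeping the implicit-function-theorem constants --- hence the radius of holomorphy $\rho_0$ --- uniform as $b$ ranges over $[-b_0,b_0]$, which is what the explicit Cauchy bounds in (i)--(ii) demand.
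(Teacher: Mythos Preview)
Your proposal is correct and follows essentially the same Lyapunov--Schmidt reduction as the paper, with the minor (and standard) simplification that you work from the outset in the even subspace $H^\alpha_{\rm even}(\RM/2\pi\ZM)$ so that the kernel of the linearization is one-dimensional, whereas the paper carries a complex amplitude $A\in\CC$ on the full space and then uses translation and reflection symmetries to reduce two solvability conditions $J_\pm=0$ to a single real equation. Both routes factor the reduced bifurcation equation by the trivial solution $\phi\equiv 0$ and then solve for the wavenumber via the implicit function theorem, and the remaining parity, analyticity, Cauchy-estimate, and bootstrap arguments you outline match those in the paper.
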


\begin{proof}
Renormalizing the period to $2\pi$, let $k\in\RM^+$ denote a wavenumber and set $z=kx$ so that
the rescaled profile equation becomes
\begin{equation}\label{e:profile2}
-k^\alpha\Lambda^\alpha u-u+u^{p+1}=b.
\end{equation}
We seek $2\pi$-periodic solutions of \eqref{e:profile2}.  To this end, first
notice that since $\alpha>\frac{1}{2}$ one can show by arguments identical that in [FL12, Appendix B]
that any $u\in H^\alpha(\RM/2\pi\ZM)$ satisfying \eqref{e:profile2} automatically lies in $H^{2\alpha+1}(\RM/2\pi\ZM)$.
Iterating now the identity 
\[
u=\left[k^\alpha(\Lambda^\alpha+1)\right]^{-1}\left(u^{p+1}+(k^\alpha-1)u-b\right)
\]
implies that $u\in H^\infty(\RM/2\pi\ZM)$, so that any $2\pi$-periodic $H^\alpha$ solution
of \eqref{e:profile2} is automatically a smooth function of $z$.  
Thus, it is sufficient to seek solutions of \eqref{e:profile2} which lie in $H^\alpha(\RM/2\pi\ZM)$.

Set $X^\alpha:=H^\alpha(\RM/2\pi\ZM)\times\RM^+\times\RM$, considered to be equipped with
the natural graph norm, and define the map $F:X^\alpha \to L^2(\RM/2\pi\ZM)$ by
\[
F(v,k,b):=-k^\alpha\Lambda^\alpha v-v+v^{p+1}-b,
\]
noting that $F$ is well-defined by Sobolev embedding.
Clearly then, the zeros of $F$ correspond to $2\pi$-periodic solutions of \eqref{e:profile2}, which
can be taken to be even functions of $z$ by applying an appropriate spatial translation.
First, we claim that $F$ is $C^1$ on $X^\alpha$.  Indeed, given
any $(v,k,b)\in X^\alpha$ we find that
\[
\frac{\partial F}{\partial v}=-k^\alpha\Lambda^\alpha-1+(p+1)v^p,\quad\frac{\partial F}{\partial k}=-\alpha k^{\alpha-1}\Lambda^\alpha
\]
and $\frac{\partial F}{\partial b}=-1$. 
Clearly $\frac{\partial F}{\partial k}$ and $\frac{\partial F}{\partial b}$ depend continuously on $(v,k,b)\in X^\alpha$,
and that $\frac{\partial F}{\partial v}$ depends continuously on $(k,b)\in\RM^+\times\RM$.
To see that $\frac{\partial F}{\partial v}$ depends continuously on $v\in H^\alpha(\RM/2\pi\ZM)$, notice that by 
Sobolev embedding we have for all $v_1,v_2\in H^\alpha(\RM/2\pi\ZM)$
\begin{align*}
\left\|v_1^{p+1}-v_2^{p+1}\right\|_{L^\infty(\RM/2\pi\ZM)}&\leq C\|v_1-v_2\|_{H^\alpha(\RM/2\pi\ZM)}\\
&\qquad\times    \left(\|v_1\|_{H^\alpha(\RM/2\pi\ZM)}^p+\|v_2\|_{H^\alpha(\RM/2\pi\ZM)}^p\right)
\end{align*}
for some constant $C>0$, from which we find
\begin{align*}
&\left\|\frac{\partial F}{\partial v}(v_1,k,b)f-\frac{\partial F}{\partial v}(v_2,k,b)f\right\|_{L^2(\RM/2\pi\ZM)}
\leq C\|v_1-v_2\|_{H^\alpha(\RM/2\pi\ZM)}\\
&\qquad\qquad\times    \left(\|v_1\|_{H^\alpha(\RM/2\pi\ZM)}^p+\|v_2\|_{H^\alpha(\RM/2\pi\ZM)}^p\right)\|f\|_{H^\alpha(\RM/2\pi\ZM)}
\end{align*}
for all $f\in H^\alpha(\RM/2\pi\ZM)$,
which establishes the continuity on $v$, as claimed.  Together, the above arguments verify
that $F$ is $C^1$ on $X^\alpha$.

Continuing, by inspection we see that $F(Q_b,k,b)=0$  for $|b|\ll 1$ and any $k\in\RM^+$.
Furthermore, for fixed $b\ll 1$ and $k>0$ we have
\[
\frac{\partial F}{\partial v}(Q_{b},k,b))=-k^\alpha\Lambda^\alpha-1+(p+1)Q_{b}^p,
\]
which, by Fourier analysis, has a trivial kernel in $L^2(\RM/2\pi\ZM)$ provided $k^\alpha\neq k^*(b)$,
where $k^*(b)$ is given in the statement of the theorem.
When $k^\alpha\neq k^*(b)$ then, the Implicit function theorem implies that the root $(Q_{b},k,b)$ of $F(v,k,b)$ continues
uniquely for $|k-k_0|\ll 1$ and $|b-b_0|\ll 1$.  By inspection, this must correspond to the nearby
equilibrium solutions $Q_b$.  
To find non-constant solutions then, we must consider the case $k^\alpha=k^*(b)$,
in which case the kernel of the above linear operator is two dimensional and is spanned by $e^{\pm iz}$.
We now construct periodic solutions $v$ to the equation $F(v,k,b)=0$ by using a Lyapunov-Schmidt reduction
for $|b|\ll 1$ and $|k^\alpha-k^*(b)|\ll 1$.

We set $k^\alpha=k^*(b)+\tilde k$ and
\begin{equation}\label{vexpand}
v(z)=Q_b+\frac{1}{2}Ae^{iz}+\frac{1}{2}\bar Ae^{-iz}+h(z)
\end{equation}
where $A\in\CM$ and $h\in H^\alpha(\RM/2\pi\ZM)$ satisfies
\[
\int_0^{2\pi}h(z)e^{\pm iz}dz=0.
\]
Substituting these expressions into the equation $F(v,k,b)=0$ leads to an equation of the form
\begin{equation}\label{perteqn}
L_bh=\mathcal{N}(h,A,\bar A,\tilde k,b),
\end{equation}
where
\[
L_{b}:=\frac{\partial F}{\partial v}(Q_{b},k^*(b),b)
\]
and $\mathcal{N}(0,0,0,\tilde k,b)=0$ for any $\tilde k,b\in\RM$.  
Again using Fourier analysis, we see that the kernel of $L_b$ is two 
dimensional and is spanned by $e^{\pm iz}$.  We denote by $P:L^2(\RM/2\pi\ZM)\to \ker(L_b)$ the spectral
projection onto the kernel of $L_b$, defined for any $u\in L^2(\RM/2\pi\ZM)$ by 
\[
Pu(z)=\hat u(1)e^{iz}+\hat u(-1)e^{-iz}.
\]
Since $Ph=0$ then, the perturbation equation \eqref{perteqn} is equivalent to the system
\begin{equation}\label{pertsystem1}
\left\{\begin{aligned}
L_bh&=\left({\rm I}-P\right)\mathcal{N}(h,A,\bar A,\tilde k,b)\\
0&=P\mathcal{N}(h,A,\bar A,\tilde k,b),
\end{aligned}\right.
\end{equation}
which is valid for any $|b|\ll 1$.

Notice the restriction of $L_b$ to $\mathcal{Z}:=({\rm Id}-P)H^\alpha(\RM/2\pi\ZM)$ has a bounded inverse given
by
\[
\left(L_b\big{|}_{\mathcal{Z}}^{-1}\right)v=\sum_{n\neq\pm 1}\frac{\hat v(n)}{k^*(b)^\alpha\left(1-|n|^\alpha\right)}
\]
for any $v\in\mathcal{Z}$.  In particular, this formula shows that the operators 
$\left(L_b\big{|}_{\mathcal{Z}}\right)^{-1}$ form a family of bounded linear operators
depending analytically on $b$.  Therefore, the system \eqref{pertsystem1} is equivalent to
\begin{equation}\label{pertsystem2}
\left\{\begin{aligned}
h&=L_b^{-1}\left({\rm Id}-P\right)\mathcal{N}(h,A,\bar A,\tilde k,b)\\
0&=P\mathcal{N}(h,A,\bar A,\tilde k,b).
\end{aligned}\right.
\end{equation}
Using the implicit function theorem \cite[Theorem 2.3]{CH}, we can solve the first equation in \eqref{pertsystem2} to find a unique
solution $h=H_*(A,\bar A,\tilde k,b)\in\mathcal{Z}$
that depends analytically on $(A,\bar A,\tilde k,b)$ in a neighborhood
of $(0,0,0,b_0)$ in the space ${\rm diag}(\CM^2)\times\RM^2$, where
${\rm diag}(\CM^2):=\{(z,\bar z):z\in\CM)$.  In particular, notice the uniqueness
of this solution implies that
\begin{equation}\label{e:horder}
H_*(0,0,\tilde k,b)=0
\end{equation}
for all $|b|\ll 1$.  Furthermore, the invariance of \eqref{e:travode}
with respect to spatial translations $z\mapsto z+z_0$ and the reflection $z\mapsto -z$ implies the relations
\begin{equation}\label{relations}
\left\{\begin{aligned}
H_*(A,\bar{A},\tilde k,b)(z+z_0)&=H_*(Ae^{iz_0},\bar Ae^{-iz_0},\tilde k, b)(z)\\
H_*(A,\bar{A},\tilde k,b)(-z)&=H_*(\bar A,A,\tilde k,b)(z).
\end{aligned}\right.
\end{equation}

Substituting the above into $P\mathcal{N}(h,A,\bar A,\tilde k,b)=0$ now yields the equation
\[
P\mathcal{N}(H_*(A,\bar{A},\tilde k,b)(z),A,\bar A,\tilde k,b)=0
\]
which must be solved.  Using the explicit form of the projection $P$, this equation has
solutions provided the two orthogonality conditions
\[
J_\pm(A,\bar A,\tilde k,b)=\int_0^{2\pi}\frac{Ae^{iz}\pm\bar Ae^{-iz}}{2}\mathcal{N}(H_*(A,\bar{A},\tilde k,b)(z),A,\bar A,\tilde k,b)dz=0\\
\]
are satisfied.
Notice that the relations \eqref{relations} imply that the functions $J_{\pm}$ satisfy
\begin{equation}\label{relations2}
\begin{aligned}
J_+(Ae^{iz_0},\bar Ae^{-iz_0},\tilde k,b)&=J_+(A,\bar A,\tilde k,b)=J_+(\bar A,A,\tilde k,b)\\
J_-(Ae^{iz_0},\bar Ae^{-iz_0},\tilde k,b)&=J_-(A,\bar A,\tilde k,b)=-J_-(\bar A,A,\tilde k,b).
\end{aligned}
\end{equation}
In particular, taking $z_0=-2\arg(A)$ in the equalities for $J_-$ implies that
\[
J_-(\bar A,A,\tilde k,b)=J_-(A,\bar A,\tilde k,b)=-J_-(\bar A,A,\tilde k,b),
\]
from which we see that the condition $J_-(A,\bar A,\tilde k,b)=0$ is always satisfied.

As for the solvability condition $J_+=0$, taking $z_0=-\arg(A)$ in \eqref{relations2}
we find $J_+(A,\bar A,\tilde k,b)=J_+(|A|,|A|,\tilde k,b)$
so that the associated solvability condition becomes $J_+(a,a,\tilde k,b)=0$ where $a\in\RM$ belongs
to a sufficiently small neighborhood of the origin.  Noting that \eqref{e:horder} implies that $a^{-1}H_*(a,a,\tilde k,b)$
is analytic in $a$ near $a=0$, it follows from the explicit form of the function $\mathcal{N}$ that
\begin{align*}
J_+(a,a,\tilde{k},b)&=\int_{0}^{2\pi}a\cos(z)\mathcal{N}(H_*(A,\bar{A},\tilde k,b)(z),A,\bar A,\tilde k,b)dz\\
&=a^2\left(\tilde{k}+\widetilde{J}(a,\tilde k,b)\right),
\end{align*}
where $\widetilde J$ is analytic in its argument, even with respect to the parameter $a$, and satisfies
$\widetilde J(0,0,b)=\partial_{\tilde k}\widetilde J(0,0,b)=0$.  By the Implicit Function Theorem \cite{CH} again, for $0<|a|\ll 1$ we obtain
a solution $\tilde{k}(a,b)$ of $\widetilde{J}(a,\tilde k,b)=-\tilde k$, and hence of 
 $J_+(a,a,\tilde k,b)=0$, defined for sufficiently small $a,b\in\RM$.
Furthermore, it follows that $\tilde{k}$ is even in $a$ and analytic in a sufficiently small
neighborhood of the origin in $\RM^2$, so that the function
$k(a,b)=\left(k^*(b)+\tilde{k}(a,b)\right)^{1/\alpha}$ satisfies the properties discussed in (i).

From the above considerations, it follows that the system \eqref{pertsystem2} has a unique solution
\[
(h,\tilde{k})=\left(H_*(A,\bar{A},\tilde k(|A|,b),b),\tilde{k}(|A|,b)\right),
\]
defined for any sufficiently small $A\in\CM$ and $|b|\ll 1$.  Substituting $h=H_*(A,\bar{A},\tilde k(|A|,b),b)$
into \eqref{vexpand} thus yields a $2\pi$-periodic solution of \eqref{e:travode}.  The periodic
solutions $P_{a,b}$ described in the theorem are now found by restricting to $A\in\RM$, so that
\[
P_{a,b}(z)=Q_b+a\cos(z)+v_{a,b}(z),\qquad v_{a,b}(z)=H_*(a,a,\tilde k(a,b),b)(z).
\]
The properties of $P_{a,b}$ described in (ii) are now easily deduced from analyticity and the symmetries
of the function $H_*(A,\bar{A},\tilde k(|A|,b),b)$, while (iii) follows directly from the expansion of
$P_{a,b}$ given in (ii).
\end{proof}

\section{Proof of Proposition \ref{bloch}}\label{a:bloch}

In this appendix, we establish a nonlocal type of Floquet-Bloch theory that is suitable for the stability
analysis presented in this paper.

\begin{proof}[Proof of Proposition \ref{bloch}]
Clearly (iii) holds for some $\xi\in[-1/2,1/2)$ if and only if the kernel of the operator $\mathcal{M}_{a,b,\xi}-\lambda{\bf I}$
acting on $L^2(\RM/2\pi\ZM)$ is non-trivial.  Furthermore, the operator $\mathcal{M}_{a,b,\xi}$ acting
on $L^2(\RM/2\pi\ZM)$ has densely defined and compactly embedded domain $H^{\alpha+1}(\RM/2\pi\ZM)$ and hence
has compact resolvent.  As a result, the spectrum of $\mathcal{M}_{a,b,\xi}$ consists of isolated eigenvalues
of finite multiplicity and, in particular, we see that $\lambda\in\sigma\left(\mathcal{M}_{a,b,\xi}\right)$ 
if and only if the operator $\mathcal{M}_{a,b,\xi}-\lambda{\bf I}$ has a nontrivial kernel.  This establishes
that (ii)$\iff$(iii).

Now, assume that (ii) does not hold, i.e. that 
the operator $\mathcal{M}_{a,b,\xi}-\lambda{\bf I}$ is boundedly invertible on $L^2(\RM/2\pi\ZM)$
for all $[-1/2,1/2)$.
Then there exists a constant $C>0$ such that
\[
\left\|\left(\mathcal{M}_{a,b,\xi}-\lambda{\bf I}\right)v\right\|_{L^2(\RM/2\pi\ZM)}\geq C\|v\|_{L^2(\RM/2\pi\ZM)}
\]
for all $v\in H^{\alpha+1}(\RM/2\pi\ZM)$ and $\xi\in[-1/2,1/2)$.  Using \eqref{par} then, we find 
for all $w\in H^{\alpha+1}(\RM)$ that
\begin{align*}
\left\|\left(\mathcal{M}_{a,b}-\lambda{\bf I}\right)w\right\|^2_{L^2(\RM)}
   &=2\pi\int_{-1/2}^{1/2}\left\|\left(\mathcal{M}_{a,b,\xi}-\lambda{\bf I}\right)\check{w}(\xi,\cdot)\right\|_{L^2(\RM/2\pi\ZM)}^2d\xi\\
&\geq 2\pi C^2\int_{-1/2}^{1/2}\left\|\check{w}(\xi,\cdot)\right\|_{L^2(\RM/2\pi\ZM)}^2d\xi\\
&=C^2\|w\|_{L^2(\RM)}.
\end{align*}
It follows then that $\mathcal{M}_{a,b}-\lambda{\bf I}$ is boundedly invertible
as an operator acting on $L^2(\RM)$.  This establishes that (i)$\Rightarrow$(ii).

Finally, assume that (ii) holds, i.e. that for some $\xi_0\in[-1/2,1/2)$ the operator
$\mathcal{M}_{a,b,\xi_0}-\lambda{\bf I}$ is not boundedly invertible on $L^2(\RM/2\pi\ZM)$.
Then by the above considerations there exists a $v\in H^{\alpha+1}(\RM/2\pi\ZM)$ such that 
$\left(\mathcal{M}_{a,b,\xi_0}-\lambda{\bf I}\right)v=0$.  
For each $0<\eps<1$, let $\phi_\eps:\RM\to\RM$ be defined by
\[
\phi_\eps(\xi)=\left\{\begin{array}{ll}
               \eps^{-1/2}&\textrm{ if }|\xi|<\eps/2\\
               0& \textrm{ if } |\xi|\geq\eps/2,
               \end{array}\right.
\]
and note that $\|\phi_\eps\|_{L^2(\RM)}=1$.
Given a fixed $\eps\in(0,1)$ then, notice that the function
\[
\RM^2\ni(\xi,z)\mapsto v(z)\phi_\eps(\xi-\xi_0)\in\CM
\]
belongs to $L^2([-1/2,1/2);L^2(\RM/2\pi\ZM))$, and can hence be viewed as the Bloch-transform
of some function $v_{\eps}\in L^2(\RM)$ with
\[
\left\|v_{\eps}\right\|_{L^2(\RM)}^2=\int_{-1/2}^{1/2}\phi_\eps(\xi-\xi_0)^2\|v\|_{L^2(\RM/2\pi\ZM)}^2d\xi=\|v\|_{L^2(\RM/2\pi\ZM)}^2
\]
for all $\eps>0$ sufficiently small.
Then for each $0<\eps\ll 1$, we find using \eqref{par} that
\begin{align*}
\left\|\left(\mathcal{M}_{a,b}-\lambda{\bf I}\right)v_{\eps}\right\|_{L^2(\RM)}^2
&=\int_{-1/2}^{1/2}\phi_\eps(\xi-\xi_0)^2\left\|\left(\mathcal{M}_{a,b,\xi}-\lambda{\bf I}\right)v(\cdot)\right\|_{L^2(\RM/2\pi\ZM)}^2d\xi\\
&=\frac{1}{\eps}\int_{|\xi-\xi_0|<\eps/2}\left\|\left(\mathcal{M}_{a,b,\xi}-\lambda{\bf I}\right)v(\cdot)\right\|_{L^2(\RM/2\pi\ZM)}^2d\xi
\end{align*}
Next, we want to show the above quantity tends to zero as $\eps\to 0^+$.

To this end, notice that Lemma \ref{l:blochexpand} in Section \ref{s:crit} implies 
that the mapping $\xi\to\mathcal{M}_{a,b,\xi}$ is continuous in the operator
norm from $H^{\alpha+1}(\RM/2\pi\ZM)$ to $L^2(\RM/2\pi\ZM)$.  Indeed, for a given $w\in H^{\alpha+1}(\RM/2\pi\ZM)$
and $\xi_1,\xi_2\in[-1/2,1/2)$ we have the estimate
\[
\left\|\left(\mathcal{M}_{a,b,\xi_1}-\mathcal{M}_{a,b,\xi_2}\right)w\right\|_{L^2(\RM/2\pi\ZM)}
\lesssim |\xi_1-\xi_2|\|v\|_{H^{\alpha+1}(\RM/2\pi\ZM)},
\]
yielding the desired continuity.  It now follows that
\begin{align*}
&\lim_{\eps\to 0^+}
\frac{1}{\eps}\int_{|\xi-\xi_0|<\eps/2}\left\|\left(\mathcal{M}_{a,b,\xi}-\lambda{\bf I}\right)v(\cdot)\right\|_{L^2(\RM/2\pi\ZM)}^2d\xi\\
&\qquad\qquad=\left\|\left(\mathcal{M}_{a,b,\xi_0}-\lambda{\bf I}\right)v\right\|_{L^2(\RM/2\pi\ZM)}^2=0,
\end{align*}
so that, in particular, given any $n\gg 1$ there exists an $\eps_n>0$ such that
\[
\left\|\left(\mathcal{M}_{a,b}-\lambda{\bf I}\right)v_{\eps_n}\right\|_{L^2(\RM)}^2<\frac{1}{n}.
\]
Recalling that $\|v_{\eps}\|_{L^2(\RM)}=\|v\|_{L^2(\RM/2\pi\ZM)}$ for all $0<\eps\ll 1$, it follows
that the operator $\mathcal{M}_{a,b}-\lambda{\bf I}$ is not boundedly invertible.
This establishes (ii)$\Rightarrow$(i), which completes the proof.
\end{proof}

\end{document}